\documentclass[onefignum,onetabnum]{siamart190516}


\usepackage{amsmath,amsfonts,amssymb}
\usepackage{graphicx}
\usepackage{epstopdf}
\usepackage{algorithmic}
\usepackage{hyperref}
\ifpdf
  \DeclareGraphicsExtensions{.eps,.pdf,.png,.jpg}
\else
  \DeclareGraphicsExtensions{.eps}
\fi

\usepackage{tikz}
\usetikzlibrary{calc,patterns}
\tikzstyle{bag} = [align=center]

\usepackage{caption}
\usepackage{subcaption}


\newsiamremark{remark}{Remark}
\newsiamremark{hypothesis}{Hypothesis}
\crefname{hypothesis}{Hypothesis}{Hypotheses}
\newsiamthm{claim}{Claim}

\headers{Derivative-Informed Neural Networks for Parametric Maps}{T. O'Leary-Roseberry, U. Villa, P. Chen, O. Ghattas}



 \title{Derivative-Informed Projected Neural Networks for High-Dimensional
   Parametric Maps Governed by PDEs\thanks{This research partially
     supported by ARPA-E DIFFERENTIATE grant DE-AR0001208; NSF
     SI2-SSI grant ACI-1550593; DOE ASCR MMICC grant DE-SC0019303; and
     NSF DMS grant 2012453.}} 


\author{Thomas~O'Leary-Roseberry\thanks{Oden Institute for Computational Engineering \& Sciences,
  The University of Texas at Austin, Austin, TX
  (\email{tom@oden.utexas.edu}, \email{peng@oden.utexas.edu}).}
\and Umberto~Villa\thanks{McKelvey School of Engineering, Washington University in St. Louis, Saint Louis, MO
  (\email{uvilla@wustl.edu}).}
\and Peng~Chen\footnotemark[1] \and 
 Omar~Ghattas\thanks{Oden Institute for Computational Engineering \& Sciences,
  Jackson School of Geosciences, and Department of Mechanical Engineering,
  The University of Texas at Austin, Austin, TX 
  (\email{omar@oden.utexas.edu}).}}

\usepackage{amsopn}

\begin{document}

\maketitle

\begin{abstract}
Many-query problems---arising from, e.g., uncertainty quantification,
Bayesian inversion, Bayesian optimal experimental design, and
optimization under uncertainty---require numerous evaluations of a
parameter-to-output map. These evaluations become prohibitive if this
parametric map is high-dimensional and involves expensive solution of
partial differential equations (PDEs). To tackle this challenge, we
propose to construct surrogates for high-dimensional PDE-governed
parametric maps in the form of derivative-informed projected neural networks (DIPNets) that
parsimoniously capture the geometry and intrinsic low-dimensionality
of these maps.
Specifically, we compute Jacobians of these PDE-based maps, and project the
high-dimensional parameters onto a low-dimensional derivative-informed
active subspace; we also project the possibly high-dimensional outputs
onto their principal subspace. This exploits the fact that many
high-dimensional PDE-governed parametric maps can be
well-approximated in low-dimensional parameter and output subspaces.
We use the projection basis vectors in the active subspace as well as
the principal output subspace to construct the weights for the first
and last layers of the neural network, respectively.  This frees us to
train the weights in only the low-dimensional layers of the
neural network.  
The architecture of the resulting neural network then captures, to
first order, the low-dimensional structure and geometry of the
parametric map.
We demonstrate that the proposed projected neural network achieves
greater generalization accuracy than a full neural network, especially
in the limited training data regime afforded by expensive PDE-based
parametric maps.
Moreover, we
show that the number of degrees of freedom of the inner layers of
the projected network is independent of the parameter and output 
dimensions, and high accuracy can be achieved with weight dimension
independent of the discretization dimension.
\end{abstract}

\begin{keywords}
  Deep learning, neural networks, parametrized PDEs,
  derivative-informed dimension reduction, active subspace, proper
  orthogonal decomposition, surrogate modeling, adjoint-based
  sensitivity, Hessian, uncertainty quantification.
\end{keywords}

\begin{AMS}
  49M41, 65C20, 93E20, 93E35
\end{AMS}

\section{Introduction}
Many problems in computational science and engineering require
repeated evaluation of an expensive nonlinear parametric map for
numerous instances of input parameters drawn from a probability
distribution $\nu$. These {\em many-query problems} arise in such
problems as Bayesian inference, forward uncertainty quantification,
optimization under uncertainty, and Bayesian optimal experimental
design (OED), and are often governed by partial
differential equations (PDEs). 
The maps are parameterized by model parameters $m$ with joint
probability distribution $\nu$, which is mapped to outputs $q$ through an implicit dependence on the solution of
the PDEs for the state $u$: 
\begin{equation}
  \underbrace{q(m) = q(u(m))}_\text{Implicit dependence} \text{ where } u \text{ depends on } m \text{ through } \underbrace{R(u,m) = 0}_\text{PDE model}
\end{equation}

The variables $m\in \mathbb{R}^{d_M},u\in \mathbb{R}^{d_U}$ are
formally discretizations of functions in Banach spaces, where the
dimensions $d_M$ and $d_U$ are often large. We assume that the outputs
$q \in \mathbb{R}^{d_Q}$ are differentiable with respect to $m$. The
outputs can be the full PDE states $u$, or state dependent quantity of interest, such as
integrals or (mollified) pointwise evaluations of the states or their
fluxes, as arise in inverse problems, optimal design and control, and
OED.

Each evaluation of $q$ requires solution of the PDEs at one instance
of $m$, thus making solution of many-query problems prohibitive when
the solution of the governing PDEs is expensive due to nonlinearity,
multiphysics or multiscale nature, and/or complex geometry. For
example, solution of complex PDEs can take hours or even days or
weeks on a supercomputer, making the solution of many-query problems
prohibitive using the full high fidelity PDE solution.

Thus, practical solution of high dimensional many-query problems
requires surrogates for the mapping $m \mapsto q$. The goal of surrogate
construction is to construct approximations of the map that are
accurate over the probability distribution for input parameters, $\nu$,
i.e., to find a surrogate $f \in \mathbb{R}^{d_Q}$ that is inexpensive
to construct and evaluate and that satisfies 
\begin{equation}\label{expected_error_bound}
  \mathbb{E}_\nu [\| q - f\|^2] = \int \|q(m) - f(m)\|^2 \, d\nu(m)  < \epsilon
\end{equation}
for a suitable tolerance $\epsilon >0$, in a suitable norm. Here $\mathbb{E}_\nu$ is the expectation with respect to $\nu$, as defined in equation \eqref{expected_error_bound}.

To tackle this challenge, various forms of surrogates have been
developed that exploit intrinsic properties of the high-dimensional
parameter-to-output map, $m \mapsto q$ such as sparsity and
low-dimensionality, including, e.g., sparse polynomials
\cite{ BabuskaNobileTempone10,ChenQuarteroni15, CohenDeVore15,
  XiuKarniadakis02a}, and reduced basis methods
\cite{Bui-ThanhWillcoxGhattas08a, ChenQuarteroniRozza2017,ChenSchwab2016,
  CohenDeVore15}. Dimension-independent complexity to
achieve a desired accuracy has been shown for these methods under
suitable assumptions; see the review in \cite{CohenDeVore15} and
references therein. In recent years, neural networks have shown great
promise in representing high dimensional nonlinear mappings,
especially in data-rich applications such as computer vision and
natural language processing; see for example
\cite{GoodfellowBengioCourville2016}. In these 
regimes neural networks with high weight dimensions can learn patterns
in very large data sets. More recently, significant interest has
arisen in using neural networks as nonlinear surrogates for
PDE-governed high-dimensional parametric maps in the many-query
setting \cite{BhattacharyaHosseiniKovachki2020,HanJentzenWeinan18, 
  RuthottoOsherLiEtAl20, SchwabZech19,
  ZhuZabarasKoutsourelakisEtAl19}.

However, the PDE-governed many-query regime is fundamentally different
from the data-rich regime of typical data science problems. The key
difference is that in the former, few data may be available to train a
surrogate due to the expense of evaluating the map, and the input--output
pairs may be very high dimensional. For example, consider a high
parameter dimension climate model: the parameters (e.g., unknown
initial conditions) and outputs (e.g., future states) may number in the
millions or billions, but one can afford only hundreds of evaluations
of the mapping $m \mapsto q$. Because the number of weights in a dense
neural network model would then
scale with the square of the number of parameters/outputs, the number
of climate model runs needed to fully inform the weights would need to
scale with the number of parameters and outputs. This is orders of
magnitude larger than is feasible.
Black box strategies for dimension reduction such as convolution
kernels can efficiently represent discretizations of stationary
processes on regular (Cartesian) grids. The limitation to regular
grids presents difficulties for many PDE problems discretized on
highly unstructured meshes. Recent works in geometric deep learning have extended convolution processes to non-Cartesian data \cite{BronsteinBrunaLeCunEtAl17,MontiBoscainiMasciEtAl17}, but even when the meshes are
uniform, the question of finding the right architecture remains, as does
the question of associated dimension reduction. Genetic algorithms can be used
to select neural network architectures
\cite{Branke1995,SuganumaShirakawaNagao2017,Yao1993}, but this can be
prohibitive, since each proposed network needs to be trained in order
to test its viability, and the number of trained networks that need to
be explored will be large due to the heuristic nature of these
algorithms.


The key idea for overcoming this critical challenge faced by the PDE-governed
setting is that the mapping $m \mapsto q$ 
often has low-dimensional structure and smoothness that can be exploited to 
aid architecture selection  as well as
provide informed subspaces of the input and intrinsic subspaces of the
outputs, within which the mapping can be well approximated.
This intrinsic low-dimensionality is revealed by the Jacobian
and higher-order derivatives of $q$ with respect to $m$.  This has
been proven for certain model problems and demonstrated numerically
for many others in model reduction \cite{AlgerChenGhattas20,BashirWillcoxGhattasEtAl08,
  ChenGhattas2019}, Bayesian inversion
\cite{Bui-ThanhBursteddeGhattasEtAl12_gbfinalist, Bui-ThanhGhattas12a,
  Bui-ThanhGhattas12, Bui-ThanhGhattas13a, Bui-ThanhGhattas15,
  Bui-ThanhGhattasMartinEtAl13,ChenGhattas20,ChenVillaGhattas2017, ChenWuChenEtAl19, FlathWilcoxAkcelikEtAl11,
  IsaacPetraStadlerEtAl15, MartinWilcoxBursteddeEtAl12,PetraMartinStadlerEtAl14a}, optimization
under uncertainty \cite{AlexanderianPetraStadlerEtAl17,ChenGhattas20a, ChenHabermanGhattas21,
  ChenVillaGhattas19}, and
Bayesian optimal experimental design
\cite{AlexanderianPetraStadlerEtAl14, AlexanderianPetraStadlerEtAl16,
  CrestelAlexanderianStadlerEtAl17, WuChenGhattas20}.

In this work we consider strategies for constructing reduced dimension
projected neural network surrogates using Jacobian-based derivative
information for effective input dimension reduction, and represent the outputs in their principal subspace. The resulting networks
can be viewed as encoder-decoder networks with a priori computed
reduced input and output bases that reflect the geometry and
low-dimensionality of the parameter-to-output map.  When the bases for
the inputs and outputs are fixed, the weight dimension of the neural
network is independent of the discretization dimensions of the
PDE model. On the other hand, when these bases are permitted to be
modified during neural network training, they serve as good initial
guesses that impose the parametric map's structure on the full
encoder-decoder training problem.

We consider active subspaces (AS) \cite{ConstantineDowWang2014,
  ZahmConstantinePrieurEtAl2020} for the input space reduction, and the principal subspace given by
proper orthogonal decomposition (POD)
\cite{ManzoniNegriQuarteroni2016, QuarteroniManzoniNegri2015} for the
output reduction. AS incorporates Jacobian information to detect input
subspaces to which the outputs are sensitive, while POD provides a low dimensional
subspace in  which the outputs are efficiently represented, we call this neural network strategy DIPNet (derivative informed projected neural network).
We compare this strategy to the use of Karhunen Lo\`{e}ve expansion (KLE)
projections for the input space and POD for the output projection, as
was done in \cite{BhattacharyaHosseiniKovachki2020}. KLE exploits low
dimensionality of the parameter probability distribution, as exposed
by the eigenvalue decay of its covariance operator, but does not
account for the outputs. KLE and POD are related to principal component
analysis (PCA) of the parameter data and output data respectively. PCA
is a popular strategy for dimension reduction in neural network
architecture selection; see for example \cite{GargPandaRoy2019}. We
favor the AS for the input projection, since it is derivative-informed; AS
explicitly incorporates the sensitivity of the outputs to the
inputs in determining the dimension reduction. 

To motivate this strategy, we derive an input--output projection error
bound based on optimal ridge functions for Gaussian parametric
mappings. We construct DIPNet surrogates based
on AS-to-POD neural network ridge functions, which we compare against
KLE-to-POD neural network ridge functions as well as conventional
approaches using full space neural networks.

We test the resulting surrogates on two different PDE
parameter-to-output map problems: one parametric mapping involving
pointwise observations of the state for a nonlinear
convection-diffusion-reaction problem, and another involving a high
frequency Helmholtz problem. We consider problems where $d_M$ is
large, but $d_Q$ is smaller; thus, we keep the first layer fixed but
train the output layer. These numerical experiments demonstrate that
full space neural network surrogates that depend on the dimension of
the discretization parameter perform poorly in generalization accuracy
in the low data regime. On the other hand, the projected neural
network surrogates are capable of achieving high accuracy in the low
data regime; in particular the DIPNet strategy performs 
best. We also test against neural network ridge functions with
identical architectures that instead use Gaussian random projection
bases to test the effect of the structured bases (DIPNet and
KLE-to-POD), and demonstrate that the random projections performs
worse than the structured ones. 

\textbf{Contributions:} We propose a general framework to construct
surrogates for high-dimensional parameter-to-output maps governed by
PDEs in the form of \linebreak derivative-informed low-dimensional projected neural
networks (DIPNets) that generalize well even with limited numbers of training
data. This framework exploits informed  input and output subspaces
that reveal the intrinsic dimensionality of the parameter-to-output 
map. The strategy involves constructing ridge function surrogates that
attempt to learn only the nonlinear mapping between informed modes of
the input parameters and output spaces. This strategy is infinite-dimensionally 
consistent since the operators used in the neural network construction are 
mesh independent.

Our work differs from \cite{BhattacharyaHosseiniKovachki2020},
which uses KLE-based input projections; instead, 
we use derivative-informed active subspace-based projections.  Both
representations have input--output projection error bounds given
respectively by the eigenvalue decays of the AS, KLE, and POD
operators. For a fixed rank ridge function surrogate, the bounds for
the AS-to-POD surrogates are tighter than those for the KLE-to-POD,
which involve the square of the Lipschitz constant for the
parameter-to-output map. These are conservative upper bounds, but one
can reasonably expect that incorporating specific information about
the outputs in the input basis can provide better accuracy with fewer
modes. This is confirmed in numerical experiments. 

Numerical experiments demonstrate that the DIPNet strategy
using AS performs well in the limited data regime typical of
PDE-governed parametric maps. When the squared Jacobian of the
parameter-to-output map has spectral structure similar to the
covariance operator of the input parameter distribution, the KLE
eigenvectors perform well as a reduced basis---nearly as well as the
AS strategy. When the parameter-to-output map differs from the
covariance operator, the KLE strategy is seen to perform worse than
the AS strategy, and only slightly better than using Gaussian random
bases.

The rest of the paper is organized as follows: In Section 2, we discuss techniques for input-output dimension reduction for parametric maps, and derive an input--output projection error
bound based on optimal ridge functions for Gaussian parametric
mappings based on AS, KLE and POD. In Section 3 we present strategies for constructing DIPNet surrogates using these building blocks, and discuss errors incurred in the surrogate construction procedure. In Section 4 we present numerical experiments that demonstrate the viability of the derivative-informed projected neural network approach for two problems arising from parametric PDE inference problems.

\section{Input-Output Projected Ridge Function Surrogates}

We proceed by discussing strategies for input-output projected ridge functions of the form
\begin{equation} \label{input_output_ridge_function}
  q(m) \approx f(m) = \Phi_{r_Q}f_r(V_{r_M}^Tm) + b_Q,
\end{equation}
where $V_{r_M}$ are a reduced basis for the input space, $\Phi_{r_Q}$ are a reduced basis for the outputs, and $f_r:\mathbb{R}^{r_M}\rightarrow \mathbb{R}^{d_Q}$ is a ridge function. We begin by examining strategies for input and output projection, and known bounds for projection errors, which lead to an input--output bound that motivates our general approach.

\subsection{Input Projection: Active Subspace and Karhunen-Lo\`{e}ve Expansion}

In the setting of parametric mappings, the input parameters $m$ come from a discretization of an infinite dimensional parameters. Algorithmic complexity suffers from the curse of dimensionality as the discretization dimension grows. In order to avoid the curse of dimensionality we seek to exploit low-dimensional structure of the parameter-to-output map in architecting neural network surrogates. 

An effective strategy for constructing an input dimension reduced surrogate is to construct an optimal ridge function approximation of $m \mapsto q$. A ridge function is a composition of the form $g \circ h$, where $h:\mathbb{R}^{d_M}\rightarrow \mathbb{R}^r$ is a linear mapping (a tall skinny matrix, with $r \ll d_M$), and $g:\mathbb{R}^r \rightarrow \mathbb{R}^{d_Q}$ is a measurable function in $L^2(\mathbb{R}^{d_M}, \sigma(h),\mathbb{R}^{d_Q})$, i.e. in the $\sigma$-algebra generated by the bases of $h$. Input dimension reduced dense neural networks have this form.

We seek to construct ridge function surrogates for the mapping $m \mapsto q$ that are restricted to subspaces of the input space that resolve intrinsic low dimensionality about the mapping. The goal is to find a (nonlinear) ridge function $g$ and a linear projector of dimension $r$, $V_r:\mathbb{R}^{d_M} \rightarrow \mathbb{R}^r$ such that for a given tolerance $\epsilon >0$,
\begin{equation}
	\mathbb{E}_\nu [\| q - g \circ V_r\|_X] \leq \epsilon
\end{equation}

in a suitable norm $X$. The key issues that remain are how to find $V_r$, i.e. both the dimension $r$ and the subspace $X_r \subset \mathbb{R}^{d_M}$ spanned by the column bases of $V_r$.

One approach we consider is to use Karhunen-Lo\`{e}ve expansion (KLE) \cite{SchwabTodor2006}, which exploits low dimensional correlation structure of the probability distribution $\nu$ of the parameters $m$. This approach is equivalent to principal component analysis (PCA) of input data. It is considered for solving parametric PDEs using dimension reduced neural networks in \cite{BhattacharyaHosseiniKovachki2020}.

For a Gaussian distribution $\nu = \mathcal{N}(m_0,C)$, with mean $m_0 \in \mathbb{R}^{d_M}$ and covariance $C \in \mathbb{R}^{d_M \times d_M}$, the optimal rank $r$ basis $V_r \in \mathbb{R}^{d_M \times r}$ for representing samples $m_i \sim \nu$ is given by the $r$ eigenvectors corresponding to the $r$ biggest eigenvalues of the covariance matrix $C = \Psi \text{diag}(c) \Psi^T$, with $(c_i,\Psi_i)_{i \geq 1}$ representing the eigenpairs of $C$ ordered such that $c_1 \geq c_2 \geq \dots \geq c_{d_M} \geq 0$, i.e
\begin{equation}
	\min_{V_r \in \mathbb{R}^{d_M \times r}} \mathbb{E}_{m_i \sim \nu}[\|(I_d - V_rV_r^T)(m_i - m_0)\|^2_2] = \sum_{i = r+1}^{d_M} c_i^2.
\end{equation}
See \cite{ZahmConstantinePrieurEtAl2020}.

The input data are restricted to the dominant modes of the covariance of the parameter distribution $\nu$. This approach is appropriate in regimes where the output variability is dominated by the parameter variability in the leading input modes, and the parameter covariance has rapidly decaying eigenvalues.

Input dimension reduction using KLE has limitations since it does not explicitly take into account the mapping $m \mapsto q$, it is only dependent on the distribution $\nu$. For this reason we propose to use derivative-informed input dimension reduction based on the map Jacobian information. The Jacobian of the outputs with respect to the parameters can be used to find a global subspace in which the outputs are most sensitive to the input parameters over the parameter space. Similar techniques have been popularized for dimension reduction for scalar valued functions under the name ``active subspaces'' (AS) \cite{ConstantineDowWang2014}, for which projection error bounds can be established using Poincar\'{e} inequalities. The ideas have been generalized to vector valued functions \cite{ZahmConstantinePrieurEtAl2020}, or scenarios where Poincar\'{e} inequalities do not hold \cite{ParenteWallinWohlmuth2020}.

The active subspace is constructed by the eigenvectors of the Jacobian information matrix, or a Gauss-Newton Hessian of $\frac{1}{2}\|q(m)\|^2_{\ell^2(\mathbb{R}^{d_Q})}$ averaged with respect to the parameter distribution, i.e.,
\begin{equation} \label{averaged_gn_hess}
  H_\text{GN} = \mathbb{E}_\nu [\nabla q ^T \nabla q ] = \int_M \nabla q(m)^T\nabla q(m) d\nu(m) \in \mathbb{R}^{d_M\times d_M}.
\end{equation}
The leading eigenvectors corresponding to the largest eigenvalues of $H_\text{GN}$ represents directions in which the parameter-to-output map $m \mapsto q$ is most sensitive to the input parameters $m \in \mathbb{R}^{d_M}$ in the $\ell^2(\mathbb{R}^{d_Q})$ sense (i.e. as informs a regression problem). 

Consider the ridge function parametrizations given by the $g(P_rm)$, where $P_r \in \mathbb{R}^{d_M \times d_M}$ is a rank-r projector. In \cite{ZahmConstantinePrieurEtAl2020} upper bounds are established for the approximation error of $m \mapsto q$ using a conditional expectation of $q$ with restricted to the dominant modes of the averaged Gauss-Newton Hessian (Equation \ref{averaged_gn_hess}). Specifically let $(\lambda,v_i)_{i\geq 1}$ denote the eigenpairs of the generalized eigenvalue problem
\begin{equation}\label{generalized_EVP_AS}
  H_\text{GN}v_i = \lambda_i C^{-1}v_i,
\end{equation}
then an upper bound for the approximation error $\|q - \mathbb{E}_\nu[q|\sigma(P_r)]\|^2_{\mathcal{H}}$ can be obtained (see Proposition 2.6 in \cite{ZahmConstantinePrieurEtAl2020}):
\begin{equation}\label{trailing_bound_active_subspace}
  \|q - \mathbb{E}_\nu[q|\sigma(P_r)]\|_\mathcal{H}^2 \leq \sum_{i=r+1}^{d_M} \lambda^M_i,
\end{equation}
when the projector is taken to be 
\begin{equation} \label{as_projector_definition}
  P_r = V_rV_r^TC^{-1}.
\end{equation}
Here the function space $\mathcal{H}  = L^2(\mathbb{R}^{d_M},\nu;\mathbb{R}^{d_Q}) $, and the $\mathcal{H}$ norm is induced by the inner product:
\begin{equation}
  (u,v)_\mathcal{H} = \int_{\mathbb{R}^{d_M}} (u(m),v(m))_{\ell^2(\mathbb{R}^{d_Q})} d\nu(m).
\end{equation}
The conditional expectation $\mathbb{E}_\nu[q|\sigma(P_r)](m)$ represents the mapping $m\mapsto q$ with orthogonal complement to $\text{span}\{P_r\}$ in the parameter space marginalized out. In the case that the projector $P_r$ is orthogonal with respect to the covariance $C$, the conditional expectation with respect to the $\sigma$-algebra generated by $P_r$ can be written as 
\begin{equation} \label{q_conditional_expectation}
  \mathbb{E}_\nu[q|\sigma(P_r)](m) = \mathbb{E}_{y \sim \nu}[q(P_rm + (I_{d_M} - P_r)y)].
\end{equation}

This bound establishes that when the spectrum of the Gauss-Newton Hessian decays rapidly, the mapping $m \mapsto q$ can be well approximated in expected value by the conditional expectation $\mathbb{E}_\nu[q|\sigma(P_r)](m)$, which is marginalized to the subspace spanned by the dominant modes of $H_\text{GN}$.

When the mapping $q$ is Lipschitz continuous with constant $L \geq 0$, and the eigenvalues of the parameter distribution covariance decay quickly, the KLE can be used to construct a similar ridge function bound, i.e. there exist a function $g$ and projector $P_r$ such that

\begin{equation}\label{eq:KLE-error}
  \| q - g \circ P_r\|_\mathcal{H}^2 \leq L^2 \sum_{i=r+1}^{d_M} c_i,
\end{equation}
where $c_i$ is the $i^{th}$ eigenvalue of the covariance $C$. Moreover, it is established (by Proposition 3.1 in \cite{ZahmConstantinePrieurEtAl2020}) that the upper bound of the active subspace projection error in \eqref{trailing_bound_active_subspace} is smaller or equal to the upper bound of the KLE projection error in \eqref{eq:KLE-error}, i.e.,
\begin{equation}\label{trailing_bound_active_subspace_vs_kle}
\sum_{i=r+1}^{d_M} \lambda^M_i \leq L^2 \sum_{i=r+1}^{d_M} c_i.
\end{equation}

Both KLE and AS can be used to detect low dimensionality of the input space, as well as to provide dominant basis vectors for the input space. KLE can be used to reduce the input dimensionality when the covariance of the parameter distribution $\nu$ has quick decay. The issue with KLE is that it does not take into account the outputs $q$. In contrast, AS can be used to reduce the input dimensionality when the covariance preconditioned Gauss-Newton Hessian has quick eigenvalue decay, which directly takes into account the sensitivity of the outputs to the input parameters and can be used in a more broad set of circumstances than KLE. It is likely that the AS projection is more accurate than the KLE projection, informed by the relation of their error bounds \eqref{trailing_bound_active_subspace_vs_kle}, and demonstrated by our numerical experiments.

The limitations of the AS approach are that the parametric mapping is
assumed to be differentiable with respect to the parameters $m$, and
that the computation of the AS basis is formally more expensive than
the computation of the KLE basis. Both bases can be calculated via
matrix free randomized methods \cite{HalkoMartinssonTropp2011}, which do not require the explicit
formation of the matrices, and instead require only matrix-vector
products. KLE requires access to just the covariance operator, while
AS require evaluations of the Jacobian of the parametric mapping at
different sample points. However, the Jacobian matrix-vector products
required in the computation of the AS subspace require only marginally
more work than the computation of the training data. This is because
once the forward PDE is solved at a sample point to construct a
training data pair, the Jacobian matrix-vector product at that point
requires solution of linearized forward and adjoint PDE problems with
$O(r)$ right hand sides each.  When direct (or iterative solvers with
expensive-to-construct preconditioners) are employed to solve these
problems, little additional work is required beyond the forward
solve. This is because triangular solves are asymptotically cheap
compared to the factorizations (or for iterative solves, the
preconditioner needs to be constructed just once).  Moreover, blocking
the $O(r)$ right hand sides in the forward/adjoint solves results in
better cache performance, thereby making the Jacobian action less
expensive that it might seem. 
In practice AS may require fewer samples than training data
construction (in numerical experiments we use significantly fewer
samples for AS construction than training data). When the spectrum of
the parameter covariance-preconditioned Gauss-Newton Hessian decays
rapidly, $r$ is small and only few matrix-vector products (and hence
linearized forward/adjoint solves) are required. For more information
see Appendix \ref{jacobian_with_adjoints}.

\subsection{Output Projection: Proper Orthogonal Decomposition}

Reduced order modeling (ROM), e.g. a reduced basis method (RBM) have been developed to help reduce the dimension of PDE outputs (solution and quantities of interest) \cite{BennerGugercinWillcox2015,ChenQuarteroniRozza2013,ChenQuarteroniRozza2017,QuarteroniManzoniNegri2015,WillcoxPeraire2002}. This methodological framework has made tractable the solution of many-query problems involving PDEs (optimization, inference, control, inverse problems etc.) \cite{BuiDamodaranWillcox2004}. In these methods low dimensional representations of the outputs are made via snapshots.
Due to the optimality of the representation in the sense of Proposition \ref{POD_bound_proposition}, this approach is also considered for solving parametric PDEs using dimension reduced neural networks \cite{BhattacharyaHosseiniKovachki2020}.

The proper orthogonal decomposition (POD) for the outputs is the eigenvalue decomposition of the expectation of the following matrix,
\begin{equation}
  \mathbb{E}_\nu[qq^T] = \int_{\mathbb{R}^{d_M}} q(m)q(m)^T d\nu(m) = \Phi D\Phi^T.
\end{equation}

Using the Hilbert-Schmidt Theorem, the POD basis is shown to be optimal for the following minimization problem (see \cite{ManzoniNegriQuarteroni2016,QuarteroniManzoniNegri2015}).

\begin{proposition}[Proposition 2.1 in \cite{ManzoniNegriQuarteroni2016}] \label{POD_bound_proposition}
The POD basis $\Phi \in \mathbb{R}^{d_Q \times r_Q}$ is such that
\begin{align}
  &\int_{\mathbb{R}^{d_M}}\| q(m) - \Phi\Phi^Tq(m)\|^2_{\ell^2(\mathbb{R}^{d_Q})} d\nu(m) \nonumber \\
   = \min_{W \in \mathbb{R}^{d_Q \times r_Q}, W^TW = I_{r_Q}} &\int_{\mathbb{R}^{d_M}}\| q(m) - WW^Tq(m)\|^2_{\ell^2(\mathbb{R}^{d_Q})} d\nu(m).
\end{align}
And the error is given by the trailing eigenvalues of $\mathbb{E}_\nu[qq^T]$:
\begin{equation}
  \int_{\mathbb{R}^{d_M}}\| q(m) - \Phi\Phi^Tq(m)\|^2_{\ell^2(\mathbb{R}^{d_Q})} d\nu(m) = \sum_{i = r_Q +1}^{\text{rank}(T)} \lambda^Q_i
\end{equation}
\end{proposition}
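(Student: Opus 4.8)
The plan is to reduce the constrained minimization to a trace-maximization problem and then invoke the variational (Ky Fan / Hilbert--Schmidt) characterization of the leading eigenvectors. First I would exploit that, since $W^TW = I_{r_Q}$, the matrix $WW^T$ is an orthogonal projector, so by the Pythagorean identity the integrand collapses to
\[
  \|q - WW^Tq\|^2_{\ell^2(\mathbb{R}^{d_Q})} = q^T(I_{d_Q} - WW^T)q = \|q\|^2_{\ell^2(\mathbb{R}^{d_Q})} - \|W^Tq\|^2_{\ell^2(\mathbb{R}^{d_Q})}.
\]
Integrating against $\nu$ and writing $\|W^Tq\|^2 = \operatorname{tr}(W^Tqq^TW)$, linearity of the expectation turns the objective into
\[
  \int \|q - WW^Tq\|^2 \, d\nu = \operatorname{tr}\!\big(\mathbb{E}_\nu[qq^T]\big) - \operatorname{tr}\!\big(W^T\mathbb{E}_\nu[qq^T]\,W\big).
\]
Because the first term is independent of $W$, minimizing over the Stiefel constraint $\{W : W^TW = I_{r_Q}\}$ is equivalent to maximizing $\operatorname{tr}(W^T\mathbb{E}_\nu[qq^T]W)$.

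Next I would appeal to the symmetry and positive semidefiniteness of $T := \mathbb{E}_\nu[qq^T]$, which admits the spectral decomposition $T = \Phi D \Phi^T$ with ordered eigenvalues $\lambda_1^Q \geq \lambda_2^Q \geq \cdots \geq 0$. By the Ky Fan maximum principle (equivalently the Hilbert--Schmidt theorem invoked just before the statement), for a symmetric positive semidefinite matrix the maximum of $\operatorname{tr}(W^TTW)$ over all orthonormal $W \in \mathbb{R}^{d_Q \times r_Q}$ equals $\sum_{i=1}^{r_Q}\lambda_i^Q$, the sum of the $r_Q$ largest eigenvalues, and is attained by taking the columns of $W$ to be the corresponding leading eigenvectors, i.e.\ $W = \Phi$. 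This establishes the optimality claim in the first displayed equation of the proposition.

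To obtain the error formula I would substitute the optimal value back. Since $\operatorname{tr}(T) = \sum_{i=1}^{\operatorname{rank}(T)} \lambda_i^Q$ and the attained maximum is $\sum_{i=1}^{r_Q}\lambda_i^Q$, the minimal residual equals
\[
  \sum_{i=1}^{\operatorname{rank}(T)}\lambda_i^Q - \sum_{i=1}^{r_Q}\lambda_i^Q = \sum_{i=r_Q+1}^{\operatorname{rank}(T)}\lambda_i^Q,
\]
which is precisely the trailing-eigenvalue bound stated.

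The only genuinely nontrivial ingredient is the Ky Fan / Hilbert--Schmidt step: establishing that no orthonormal frame can capture more of the trace than the top $r_Q$ eigenvectors. In finite dimensions this follows from a standard eigenvalue-interlacing or Lagrange-multiplier argument, so the remaining steps are routine bookkeeping. The main obstacle would arise only if one wished to treat the genuinely infinite-dimensional output setting, where $qq^T$ becomes a trace-class operator on a Hilbert space and one must verify the compactness and summability conditions that justify the spectral decomposition and the maximization; since here $q \in \mathbb{R}^{d_Q}$ with $d_Q$ finite, this concern reduces to the classical matrix result.
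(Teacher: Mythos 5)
Your proof is correct and follows essentially the same route the paper relies on: the paper states this result as a citation of Proposition 2.1 in Manzoni--Negri--Quarteroni and attributes it to the Hilbert--Schmidt theorem, which is exactly the trace-maximization/variational-eigenvalue argument you carry out explicitly. The reduction to $\operatorname{tr}(T) - \operatorname{tr}(W^T T W)$ and the Ky Fan step are the standard proof of POD optimality, so nothing further is needed.
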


POD serves as a constructive prescription for a low rank basis that is optimal in the $\ell^2$ sense. It also serves as a means of reliably calculating the inherent dimensionality of the outputs for the  mapping $m \mapsto q$. The POD basis can be approximated via Monte Carlo samples directly from training data using the ``method of snapshots'' \cite{Pinnau08}.

\subsection{Input--Output Error Bound for Optimal Ridge Function} \label{section_input_output_bound}

Reduced bases for the parameter space $\mathbb{R}^{d_M}$ and the
output space $\mathbb{R}^{d_Q}$ can be used to design input--output
reduced ridge function surrogates for the mapping $m \mapsto q$ as in
Equation \eqref{input_output_ridge_function}. When the input
dimensions that inform the outputs are well approximated by the modes
spanned by $V_{r_M}$, and the outputs are well approximated when
restricted to the span of $\Phi_{r_Q}$, then the mapping $m \mapsto q$
can be well approximated by ridge functions of this form. A schematic
for this ridge function strategy is given in Figure
\ref{ridge_schematic}. 

\begin{figure}[H]
\center
\begin{tikzpicture}[scale = 0.9,every node/.style={draw,outer sep=0pt,thick}]
\node[bag] (Input) at (0,0) [minimum width=1cm,minimum height=3cm] {Input basis\\ $\mathbb{R}^{d_M} \rightarrow \mathbb{R}^{r_M}$};
\node[bag] (NN) at (4,0) [minimum width=3cm,minimum height=0.6cm] {$f_r:\mathbb{R}^{r_M}\rightarrow \mathbb{R}^{r_Q}$};
\draw (Input.north east) -- (NN.north west) (Input.south east) -- (NN.south west);
\node[bag] (Output) at (8,0)[minimum width=1cm,minimum height=1.2cm] {Output basis\\ $\mathbb{R}^{r_Q} \rightarrow \mathbb{R}^{d_Q}$};
\draw (NN.north east) -- (Output.north west) (NN.south east) -- (Output.south west);
\end{tikzpicture}
\caption{Dimension reduced representation by conditional expectation ridge function.}
\label{ridge_schematic}
\end{figure}
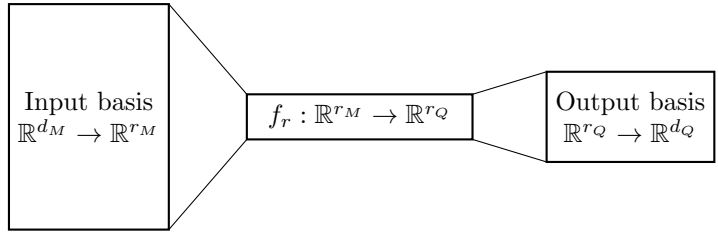

Combining each of the KLE and AS approaches with POD, an error bound can be established for the conditional expectation ridge function that is restricted to the dominant modes of the POD basis.

\begin{proposition}[Input--Output Ridge Function Error Bound]\label{prop_input_output_error_bound}

Let $P_{r_M} = V_rV_r^TC^{-1}\in\mathbb{R}^{d_M}$ be the projectors coming from the active subspace generalized eigenvalue problem \eqref{generalized_EVP_AS}, and define the conditional expectation of the outputs $q$ with respect to the $\sigma$-algebra generated $P_{r_M}$ given in Equation \eqref{as_projector_definition}:
\begin{equation}
  q_{r_M}(m) = \mathbb{E}_\nu[q|\sigma(P_{r_M})](m).
\end{equation}
Define the rank $r_Q$ POD decomposition for $q_{r_M}$ as follows:
\begin{equation}
  \big[\mathbb{E}_\nu[q_{r_M}q_{r_M}^T]\big]_{r_Q} = \bigg[\int_{\mathbb{R}^{d_M}}q_{r_M}(m)q_{r_M}(m)^T d\nu(m)\bigg]_{r_Q} = \widehat{\Phi}_{r_Q}\widehat{D}_{r_Q} \widehat{\Phi}^T_{r_Q}.
\end{equation}
Then one has
\begin{equation}
  \int_{\mathbb{R}^{d_M}} \|q(m) - \widehat{\Phi}_{r_Q}\widehat{\Phi}_{r_Q}^Tq_{r_M}(m)\|^2_{\ell^2(\mathbb{R}^{d_Q})}d\nu(m) \leq \sum_{i=r_M+1}^{d_M}\lambda_i^M + \sum_{i=r_Q +1}^{d_Q} \lambda_i^Q 
\end{equation}
Further if $q$ is Lipschitz continuous with constant $L\geq 0$, then
\begin{equation}
  \int_{\mathbb{R}^{d_M}} \|q(m) - \widehat{\Phi}_{r_Q}\widehat{\Phi}_{r_Q}^Tq_{r_M}(m)\|^2_{\ell^2(\mathbb{R}^{d_Q})}d\nu(m) \leq L^2\sum_{i=r_M+1}^{d_M}c_i + \sum_{i=r_Q +1}^{d_Q} \lambda_i^Q 
\end{equation}
\end{proposition}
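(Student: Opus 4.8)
The plan is to split the total error into an input-projection error and an output-projection error that are orthogonal in $\mathcal{H}$, so that the squared norm decomposes additively via a Pythagorean identity. First I would add and subtract $q_{r_M}$ inside the norm, writing
\begin{equation}
  q - \widehat{\Phi}_{r_Q}\widehat{\Phi}_{r_Q}^T q_{r_M} = (q - q_{r_M}) + (I_{d_Q} - \widehat{\Phi}_{r_Q}\widehat{\Phi}_{r_Q}^T) q_{r_M}.
\end{equation}
The first summand is the error of approximating $q$ by its conditional expectation onto $\sigma(P_{r_M})$, controlled by the active subspace spectrum; the second is the POD truncation error applied to the already input-reduced map $q_{r_M}$.

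The crux is to show that these two summands are $\mathcal{H}$-orthogonal so the cross term vanishes. Recall that $q_{r_M} = \mathbb{E}_\nu[q|\sigma(P_{r_M})]$ is precisely the orthogonal projection of $q$ onto the closed subspace of $\mathcal{H}$ consisting of vector-valued $\sigma(P_{r_M})$-measurable functions; hence $q - q_{r_M}$ is $\mathcal{H}$-orthogonal to every such function. Since $\widehat{\Phi}_{r_Q}$ is a constant matrix independent of $m$, each component of $(I_{d_Q} - \widehat{\Phi}_{r_Q}\widehat{\Phi}_{r_Q}^T)q_{r_M}$ is a fixed linear combination of the $\sigma(P_{r_M})$-measurable components of $q_{r_M}$, and is therefore itself $\sigma(P_{r_M})$-measurable. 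Applying the orthogonality componentwise through the $\ell^2(\mathbb{R}^{d_Q})$ inner product forces the cross term to zero, yielding
\begin{equation}
  \|q - \widehat{\Phi}_{r_Q}\widehat{\Phi}_{r_Q}^T q_{r_M}\|_{\mathcal{H}}^2 = \|q - q_{r_M}\|_{\mathcal{H}}^2 + \|(I_{d_Q} - \widehat{\Phi}_{r_Q}\widehat{\Phi}_{r_Q}^T)q_{r_M}\|_{\mathcal{H}}^2.
\end{equation}
I expect this measurability bookkeeping to be the main obstacle, since it requires care that the deterministic output projection preserves $\sigma(P_{r_M})$-measurability and that conditional expectation of a vector-valued map acts componentwise; once these are in place the orthogonality is just the defining property of the $L^2$ projection.

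It then remains to bound the two terms by quantities already available. For the input term, the active subspace bound \eqref{trailing_bound_active_subspace} with the projector \eqref{as_projector_definition} gives $\|q - q_{r_M}\|_{\mathcal{H}}^2 \leq \sum_{i=r_M+1}^{d_M}\lambda_i^M$. For the output term, I would invoke the POD optimality of Proposition \ref{POD_bound_proposition} with $q$ replaced by $q_{r_M}$: because $\widehat{\Phi}_{r_Q}$ diagonalizes $\mathbb{E}_\nu[q_{r_M}q_{r_M}^T]$, the truncation error equals the trailing eigenvalues, so $\|(I_{d_Q}-\widehat{\Phi}_{r_Q}\widehat{\Phi}_{r_Q}^T)q_{r_M}\|_{\mathcal{H}}^2 = \sum_{i=r_Q+1}^{d_Q}\lambda_i^Q$, with eigenvalues past the rank vanishing. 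Summing the two estimates yields the first inequality. The Lipschitz version then follows immediately: applying the comparison \eqref{trailing_bound_active_subspace_vs_kle}, namely $\sum_{i=r_M+1}^{d_M}\lambda_i^M \leq L^2\sum_{i=r_M+1}^{d_M}c_i$, to the input term while leaving the output term unchanged produces the second inequality.
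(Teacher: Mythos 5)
Your proof is correct and uses the same decomposition as the paper --- add and subtract $q_{r_M}$, bound the input term by the active-subspace result \eqref{trailing_bound_active_subspace} (or its KLE counterpart via \eqref{trailing_bound_active_subspace_vs_kle}) and the output term by Proposition \ref{POD_bound_proposition}. Where you differ is in how the cross term is dispatched: the paper simply invokes ``the triangle inequality'' and writes the squared error as bounded by the sum of the two squared errors, which, taken literally, is not what the triangle inequality gives (one would get either a surviving cross term or a factor of $2$ from $(a+b)^2 \le 2a^2 + 2b^2$). Your Pythagorean argument --- that $q - q_{r_M}$ is $\mathcal{H}$-orthogonal to every $\sigma(P_{r_M})$-measurable function, and that $(I_{d_Q} - \widehat{\Phi}_{r_Q}\widehat{\Phi}_{r_Q}^T)q_{r_M}$ is such a function because it is a fixed deterministic linear image of $q_{r_M}$ --- shows the cross term vanishes exactly, which is precisely what is needed to obtain the stated bound with constant $1$ on each term. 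In that sense your write-up supplies the justification the paper's one-line proof elides; the only caveat is that the orthogonality of the conditional expectation requires $P_{r_M}$ to be taken as in \eqref{as_projector_definition} so that \eqref{q_conditional_expectation} is indeed the $L^2(\nu)$ projection, which both you and the paper assume.
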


\begin{proof}
This result follows from the triangle inequality
\begin{align}
  &\int_{\mathbb{R}^{d_M}} \|q(m) - \widehat{\Phi}_{r_Q}\widehat{\Phi}_{r_Q}^Tq_{r_M}(m)\|^2_{\ell^2(\mathbb{R}^{d_Q})}d\nu(m)  \nonumber\\ 
  \leq &\int_{\mathbb{R}^{d_M}} \|q(m) - q_{r_M}(m)\|^2_{\ell^2(\mathbb{R}^{d_Q})} + \|q_{r_M}(m) - \widehat{\Phi}_{r_Q}\widehat{\Phi}_{r_Q}^Tq_{r_M}(m)\|^2_{\ell^2(\mathbb{R}^{d_Q})}d\nu(m),
\end{align}
and application of \eqref{trailing_bound_active_subspace},\eqref{trailing_bound_active_subspace_vs_kle} and Proposition \ref{POD_bound_proposition}.
\end{proof}

This result establishes that when the spectra for $\mathbb{E}_\nu[\nabla q^T \nabla q]$ and $\mathbb{E}_\nu[qq^T]$ decay rapidly, the map $m\mapsto q$ can be well approximated over $\nu$ when restricted to the dominant subspaces of AS and POD. Further if $q$ is Lipschitz continuous with constant $L$ not too large, and the covariance of $\nu$ has quick eigenvalue decay, then the map can be well approximated when restricted to the dominant subspace of KLE and POD.

If the mapping $m \mapsto q$ satisfy these conditions, then we just need to find a low dimensional ridge function mapping $f_r:\mathbb{R}^{r_M} \rightarrow \mathbb{R}^{r_Q}$. In the next section we consider feedforward neural networks for the nonlinear ridge functions. This input--output dimension reduction strategy is general and can be readily extended to different nonlinear representations such as polynomials, Gaussian process etc.

\section{Projected Neural Network} \label{section_nn_architecture}

In this section we present strategies for constructing projecting neural networks. A projected neural network, parametrized by weights $\mathbf{w} = [w,b_Q] \in \mathbb{R}^{d_W}$, can be written as follows:
\begin{equation}
  f(m,\mathbf{w}) = \Phi_{r_Q} f_r(V_{r_M}^Tm,w) + b_Q.
\end{equation}

The function $f_r$ represents a sequence of nonlinear compositions of affine mappings between successive latent representation spaces $\mathbb{R}^{d_\text{layer}}$. The architecture of the neural network includes the choice of layers, layer dimensions (widths) and nonlinear activation functions; for more information see \cite{GoodfellowBengioCourville2016}. Once an architecture is chosen, optimal weights are found via the solution of an empirical risk minimization problem over given training data. Here we consider least squares regression: given $\{(m_i,q_i)|m_i \sim \nu\}_{i=1}^{N_\text{train}}$, find the weights $\mathbf{w}$ that minimize
\begin{equation} \label{nn_least_squares}
	\min_{\mathbf{w} \in \mathbb{R}^{d_W}}\frac{1}{N_\text{train}}\sum_{i=1}^{N_\text{train}} \|f(m_i,\mathbf{w}) - q(m_i)\|^2_{\ell^2(\mathbb{R}^{d_Q})}.
\end{equation}

In the setting we consider, a limited number of data are expected to be available for the empirical risk minimization problem. A general rule of thumb is to have a number independent training data $N_\text{train}$ that is commensurate to the number of weights, $d_W$, to be inferred, (for linear problems $N_\text{train} > d_W / d_Q$ linearly independent data are needed). Since we cannot afford many training data, instead we try to keep $d_W$ small, and the network parsimonious.

A common dimension reduction strategy for regression problems in machine learning is to use an encoder-decoder network \cite{Kramer1991}. In an encoder-decoder input data is nonlinearly contracted to lower dimensions and then extended to the outputs. The main issue in designing an encoder-decoder is to find a reduced dimensionality that is appropriate. In this setting the intrinsic dimensionalities of the inputs and outputs are exposed by AS, KLE and POD. These decompositions can be used to help identify appropriate dimensions for encoder-decoder architectures. 

Additionally the basis representations that result from these decompositions can be used explicitly in the neural network configuration. When training a full neural network (i.e. including the first and last layers as weights) the projectors can be used as initial guesses for neural network training, which can ease the difficulty of the optimization problem, by initializing the portions of the weights corresponding to the first and last layers to regions of the parameter space and outputs that are informed by the parameters $m$ or the parameter-to-output map. 

Otherwise one can architect a parsimonious network by keeping the
input or output projectors fixed. In this case the dimension of the
trainable neural network does not depend explicitly on $d_M$ or $d_Q$,
but only on $r_M$ and $r_Q$. The network is constrained to only learn
patterns between the dominant subspaces of the parameter space and
outputs, as exposed by the projectors. If the dependence of the weight
dimension on $d_M$ or $d_Q$ is removed by constraining the neural
network to the dominant subspaces of the parameter space and outputs,
the weights for the neural network can be inferred using few training
data. This makes this strategy attractive in settings where the input
or output dimensions are very large and the mapping needs to be
queried many times, as in many many-query applications including inverse
problems, optimal experimental design, forward uncertainty
quantification, and optimal control and design.

\subsection{Discussion of Errors} \label{section:discussion_of_errors}

In Section \ref{section_input_output_bound} we discussed errors in approximating $m \mapsto q$ by a ridge function restricted to the dominant modes of both the inputs and outputs. The bounds there are for projection errors. When we attempt to represent the map by a neural network ridge function other errors are introduced. There is an error in the approximation of the low dimensional mapping by the neural network instead of other nonlinear ridge functions. We can think of this error as
\begin{equation}
    \int_{\mathbb{R}^{d_M}}\|f(m,\mathbf{w}) - q_{r_M}(m)\|_{\ell^2(\mathbb{R}^{d_Q})}^2 d\nu(m) = \text{Representation Error}.
\end{equation}
Some theoretical guarantees have been established for neural network
representation errors; see for example
\cite{SchwabZech19,MaWuE2019,MaWojtowytschWuEtAl20} 
As was discussed in the previous section, the neural network weights are found via the solution of an empirical risk minimization problem, for which we use Monte Carlo samples to approximate the integral with respect to the parameter distribution $\nu$, which incurs a sampling error that leads to a generalization gap \cite{MaWojtowytschWuEtAl20}. The empirical risk minimization problem is nonconvex, and so finding a global minimizer is NP-hard. One has to instead settle for local minimizers that perform reasonably well, and this introduces an additional optimization error:
\begin{subequations}
\begin{equation}
  \mathbf{w}^\dagger \neq \mathbf{w}^* = \text{argmin} \frac{1}{N_\text{train}}\sum_{i=1}^{N_\text{train}} \| f(m_i,\mathbf{w})- q(m_i)\|_{\ell^2(\mathbb{R}^{d_Q})}^2. 
\end{equation}
The approximate solution ($\mathbf{w}^\dagger$) to the nonconvex optimization problem, is generally different than the global minimizer $(\mathbf{w}^*)$, and the difference between the local minimum and the global minimum is the optimization error
\begin{align}
  \bigg|\frac{1}{N_\text{train}}\sum_{i=1}^{N_\text{train}} \| f(m_i,\mathbf{w}^\dagger) - q(m_i)\|_{\ell^2(\mathbb{R}^{d_Q})}^2 - \| f(m_i,\mathbf{w}^*) - q(m_i)\|_{\ell^2(\mathbb{R}^{d_Q})}^2\bigg| \nonumber \\
  = \text{Optimization Error}.
\end{align}
\end{subequations}
Recent work suggests that for feedforward neural networks this error can be mitigated by an iterative layerwise optimization procedure \cite{ChanYuYouEtAl20}. An additional sampling error is incurred in the Monte Carlo approximations made for the computation of the AS and POD projectors. Bounds for such errors are investigated in \cite{HolodnakIpsenSmith18}. In total, the errors incurred can be decomposed into projection error, representation error, sampling errors and optimization error. Each of these errors plays a role in the final solution one gets when constructing a surrogate of this form.

\section{Numerical Experiments}

We conduct numerical experiments on input--output projection based neural networks on two PDE based regression problems. We consider derivative informed projected neural networks (DIPNets) which use AS for input projection and POD for output projection, we compare this strategy against KLE for input projection and POD for the output projection. Proposition \ref{prop_input_output_error_bound} suggests that the decay of the eigenvalues for KLE, AS and POD can be used to choose the ranks for these architectures. In order to have a consistent error tolerance criterion between AS and KLE, the Lipschitz constant $L$ appearing in the upper bound \eqref{trailing_bound_active_subspace_vs_kle} must be used for comparison. Since this constant is not known a priori, we use fixed ranks in order to have a fair comparison between DIPNet and KLE-to-POD. For simplicity we take the input and output ranks to be the same to demonstrate the generalization accuracy of the projected neural networks compared to the full space network. More representative projected architectures could be achieved by more detailed rank selection procedures, but at additional computational cost.

To demonstrate the benefit of using the projection basis vectors to construct the neural networks, we also use Gaussian random vectors for comparison. Moreover, to demonstrate the projection errors separately, without involving the neural network representation and optimization errors, we also consider the following projection error

\begin{equation}\label{input_output_projection_error}
  \mathbb{E}_{m \sim \nu}\left[\frac{\|P_\text{POD}q(P_\text{input}m) - q(m)\|_{\ell^2(\mathbb{R}^{d_Q})}}{\|q(m)\|_{\ell^2(\mathbb{R}^{d_Q})}}\right].
\end{equation}
Here $P_\text{POD}$ is the projector on to the principal subspace for the outputs as represented by POD, and $P_\text{input}$ is the input projector (either AS or KLE). These errors are studied as a function of rank.

For the training of the projected neural network, once the architectures (including projection bases, network layers, layer dimensions, and nonlinear activation functions) are chosen, the locally optimal weights $\mathbf{w}^\dagger \in \mathbb{R}^{d_W}$ are found via solving an empirical risk minimization problem. Given training data $X_\text{train} = \{(m_i,q(m_i))\}_{i=1}^{N_\text{train}}$, candidate optimal weights are found via solving the empirical risk least-squares minimization problem,
\begin{equation} \label{model_arch_emp_risk}
  \min_{w\in \mathbb{R}^{d_W}} F_{X_\text{test}}(w) = \frac{1}{2N_\text{train}} \sum_{i=1}^{N_\text{train}} \| q(m_i) - f(m_i,\mathbf{w})\|^2_{\ell^2(\mathbb{R}^{d_Q})}.
\end{equation}
We used a subsampled inexact Newton CG method because it performed reliably better than Adam, SGD, and other first order methods on the problems we tested \cite{OLearyRoseberryAlgerGhattas2019}. We define the relative error as
\begin{equation}
  \text{Relative Error} =  \sqrt{\frac{\sum_{i=1}^{N_\text{test}}\|q(m_i) - f(m_i,\mathbf{w}^\dagger)\|^2}{\sum_{i=1}^{N_\text{test}}\|q(m_i)\|^2}},
\end{equation}
and the accuracy as 
\begin{equation}
  \text{Accuracy} = 100(1 - \text{Relative Error}).
\end{equation}

We are particularly interested in what surrogates can achieve good performance with only a limited number of training data, as these are the conditions of the problems that motivate this work. We study the performance of the neural networks as a function of training data, $N_\text{train}$. For each problem $2048$ total data are generated, with $512$ set aside as testing data.

Since the performance of the stochastic optimizer can be sensitive to the choice of initial guesses for the weights not given by the projectors, we average results over ten initial guesses for each neural network, and report averages and standard deviations. We are interested in neural networks that are robust to the choice of initial guess for the weights. 

We compare the projected neural networks against a full space neural network. The full space neural network (FS) maps the input data to the output dimension, $d_Q$. We consider $d_Q < d_M$, so that the FS is not too high dimensional. For both the projected and full space networks we use two nonlinear layers for simplicity and ease of training, with softplus activation functions; these choices performed well during training. For each of the projected neural networks the weights in the input layer are be fixed during training, while the weights in the output layer are trained, which we find improved the generalization accuracy. This allows for a neural network with a parsimonious training weight dimension independent of $d_M$. We do not include the effect of training the input layer in these results; these networks only performed well when the trained input reduced network weights were used as an initial guess, and even then they did not outperform the input reduced networks. This came at the expense of increasing the neural network weight dimension by orders of magnitude. The input and output projectors were orthogonalized and rescaled, which significantly improved generalization accuracy. For more details see Appendix \ref{appendix_implementation_specifics}.

The naming conventions we use for the neural networks is ``DIPNet'' for
the neural networks that use the AS projector for the input and the
POD projector for the outputs. We call the networks that use KLE for
the input and POD for the output ``KLE'' networks. We call the
Gaussian random projected networks ``RS'' for random space, and the
full space networks ``FS.'' We investigate two numerical examples
that involve parametric maps related to PDE-governed inference
problems.  

The first example is a 2D semi-linear convection-diffusion-reaction PDE, where the parameters $m$ represents a coefficient in a nonlinear reaction term. The second example is a 2D Helmholtz PDE with a single source, where the parameters $m$ represents a variable coefficient. In both examples the outputs are the PDE state evaluated at some points the interior of the PDE domain.

Both problems use a Gaussian distribution with Mat\'{e}rn covariance for the uncertain parameter distribution $\nu(m)$, for more information see Appendix \ref{section_matern_prior}. The Mat\'{e}rn covariance can be represented by the inverse of a coercive elliptic operator. The KLE basis vectors are taken as eigenvectors corresponding to the leading eigenvalues of the discrete covariance, discretized by a finite element method.

\subsection{Convection-Diffusion-Reaction Problem} \label{section_confusion}

For a first case study, we investigate a 2D quasi-linear
convection-diffusion-reaction problem with a nonlinear reaction term. 
The formulation of the problem is
\begin{subequations}
\begin{align}
  - \nabla \cdot (k \nabla u) + \mathbf{v} \cdot \nabla u + e^m u^3 &=
  f \quad \text{in } \Omega \\ 
  u &= 0 \text{ on } \partial \Omega \\
  q(m) = Bu(m) &= [u(\mathbf{x}^{(i)},m)] \quad \text{at } \mathbf{x}^{(i)} \in (0.6,0.8)^2\\
  \Omega &= (0,1)^2.
\end{align}
\end{subequations}
where  $m$ is a mean-zero random field representing the log-coefficient of the
reaction term. 
The volumetric forcing function $f$ is a Gaussian bump located at $x_1 = 0.7,x_2=0.7$. The velocity field $\mathbf{v}$ used for each simulation is the solution of a steady-state Navier-Stokes equation with side walls driving the flow. See Appendix \ref{num_exp_appendix} for more information.

Here we consider unit square meshes 
parametrized by $n_x,n_y$. We study two meshes, $n_x = n_y =
64$ and $n_x = n_y = 192$. The Gaussian Mat\'{e}rn distribution for
$\nu$ is parametrized by $\gamma = 0.1,\delta = 1.0$. We start by
investigating the eigenvalue decompositions for AS, KLE and POD, and
the input--output projection errors given by
\ref{input_output_projection_error}.

In Figure \ref{as_kle_spectra_confusion}, the AS spectra agree between the two mesh discretization and are roughly mesh independent. The KLE spectra agree between the two mesh discretization and are roughly mesh independent. The decay of the AS spectra are quicker than those of the KLE spectra.

\begin{figure}[H] 
\begin{subfigure}{0.5\textwidth}
\includegraphics[width = \textwidth]{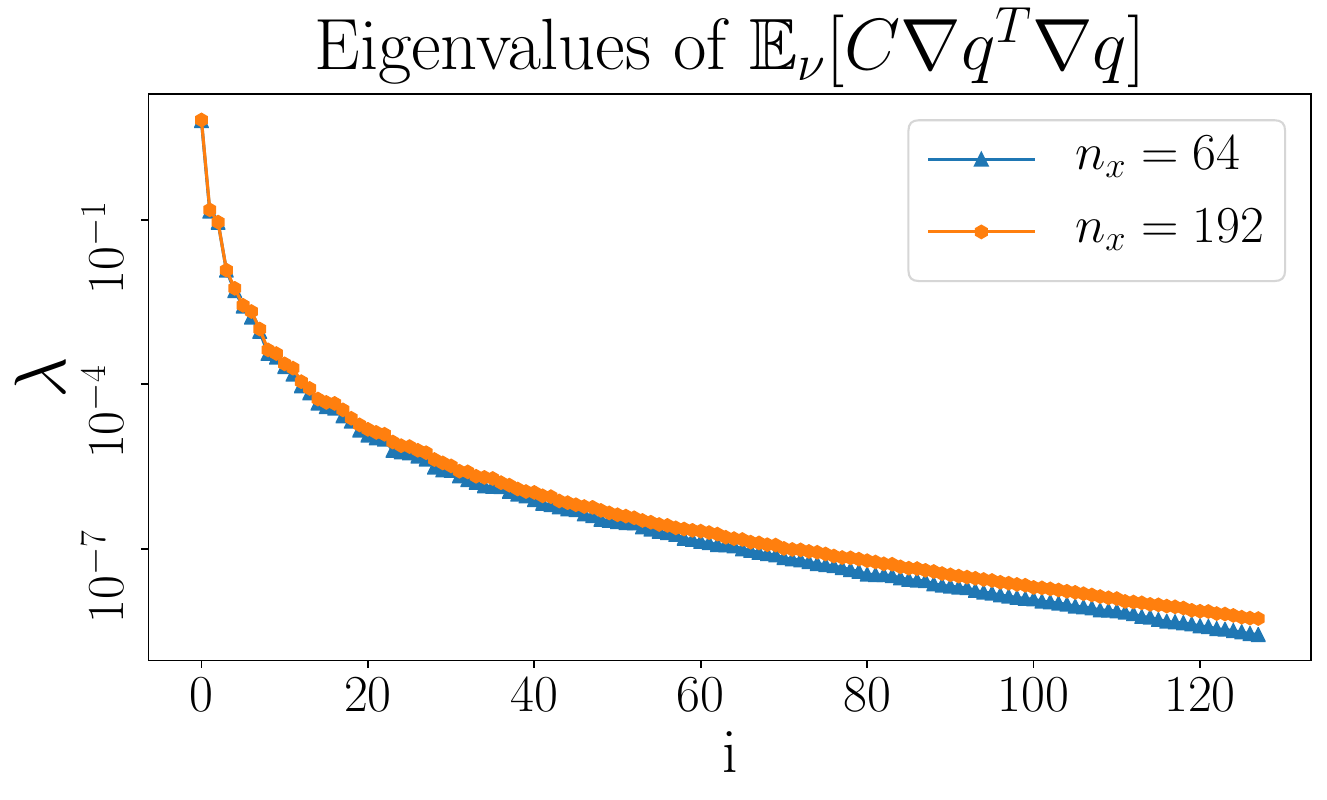}
\end{subfigure}%
\begin{subfigure}{0.5\textwidth}
\includegraphics[width = \textwidth]{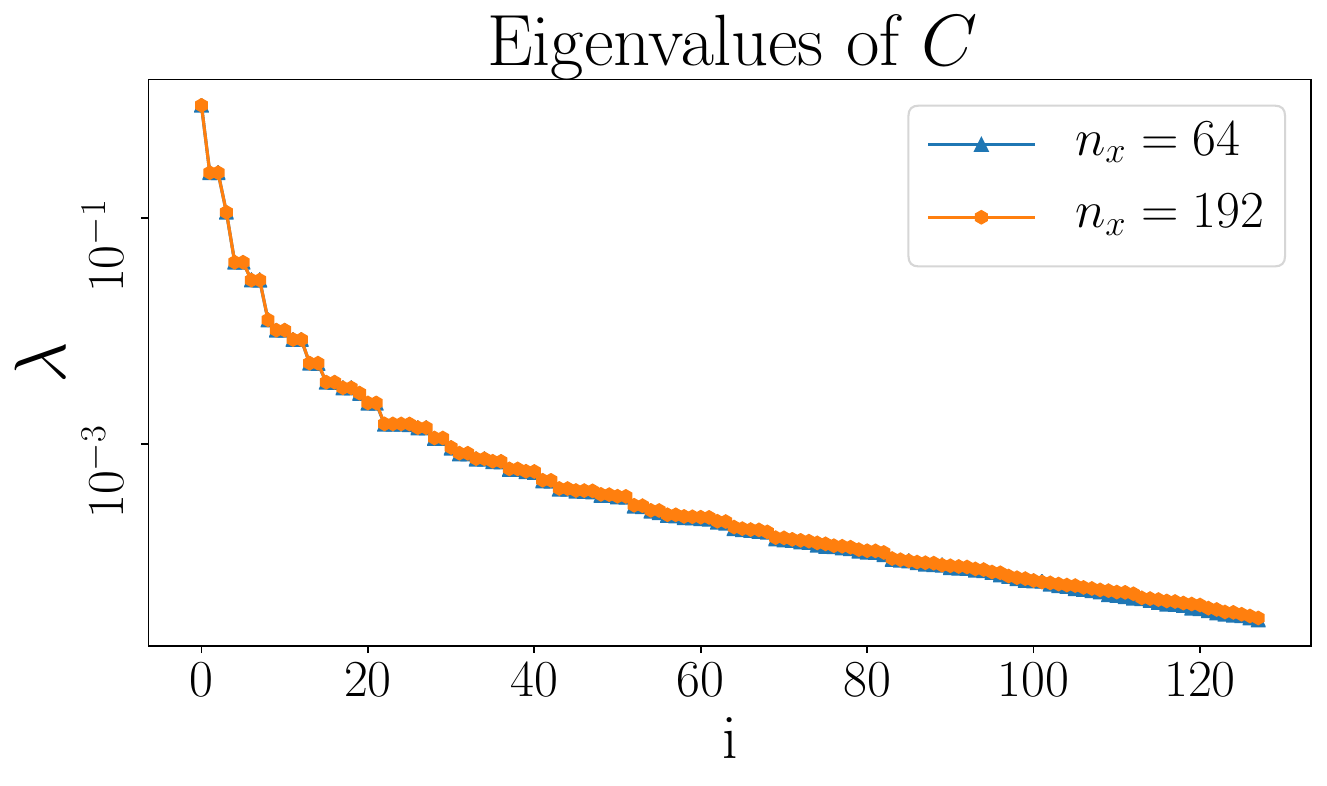}
\end{subfigure}
\caption{Active subspace and KLE eigenvalue decay for $\gamma =0.1, \delta = 1.0$}
\label{as_kle_spectra_confusion}
\end{figure}

In Figure \ref{as_kle_vectors_confusion},  the dominant vectors of AS are localized to the part of the domain where the observations are present. The KLE eigenvectors represent the dominant eigenmodes of the fractional PDE operator that is the covariance matrix. For this problem, which has Dirichlet boundary conditions and a unit square mesh, these modes correspond to the constant, and sines and cosines that arise in separation of variables. These eigenvectors do not pick up local information about the mapping $m \mapsto q$.

\begin{figure}[H] 
\begin{minipage}{0.85\textwidth}
\begin{subfigure}{0.25\textwidth}
\center
\includegraphics[width=\textwidth]{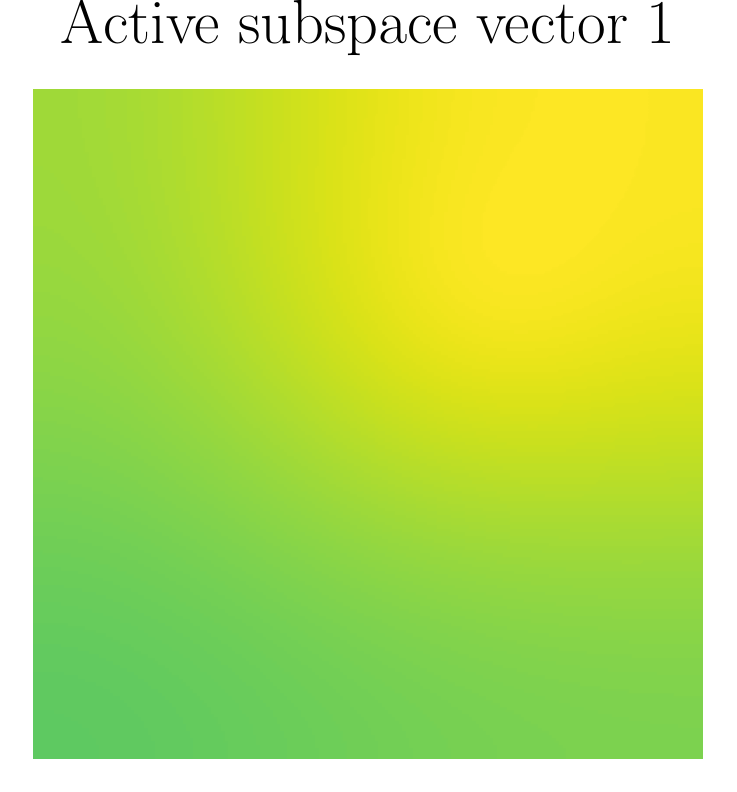}
\end{subfigure}%
\begin{subfigure}{0.25\textwidth}
\center
\includegraphics[width=\textwidth]{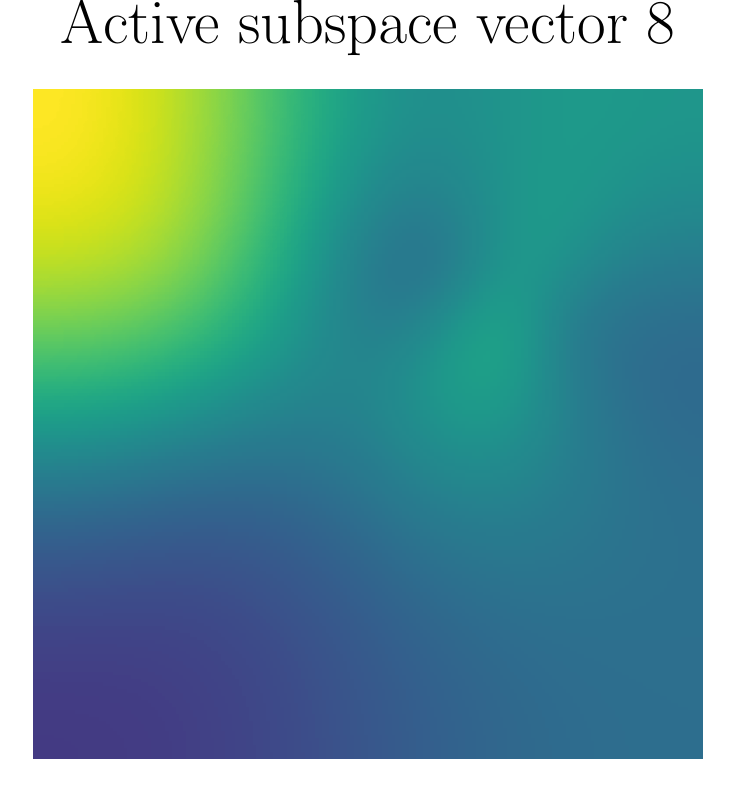}
\end{subfigure}%
\begin{subfigure}{0.25\textwidth}
\center
\includegraphics[width=\textwidth]{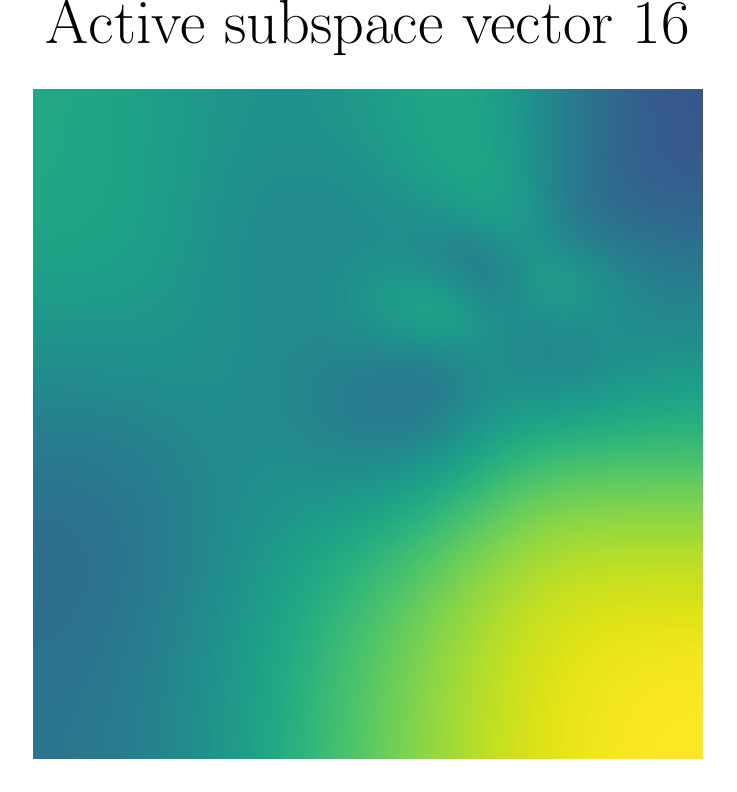}
\end{subfigure}%
\begin{subfigure}{0.25\textwidth}
\center
\includegraphics[width=\textwidth]{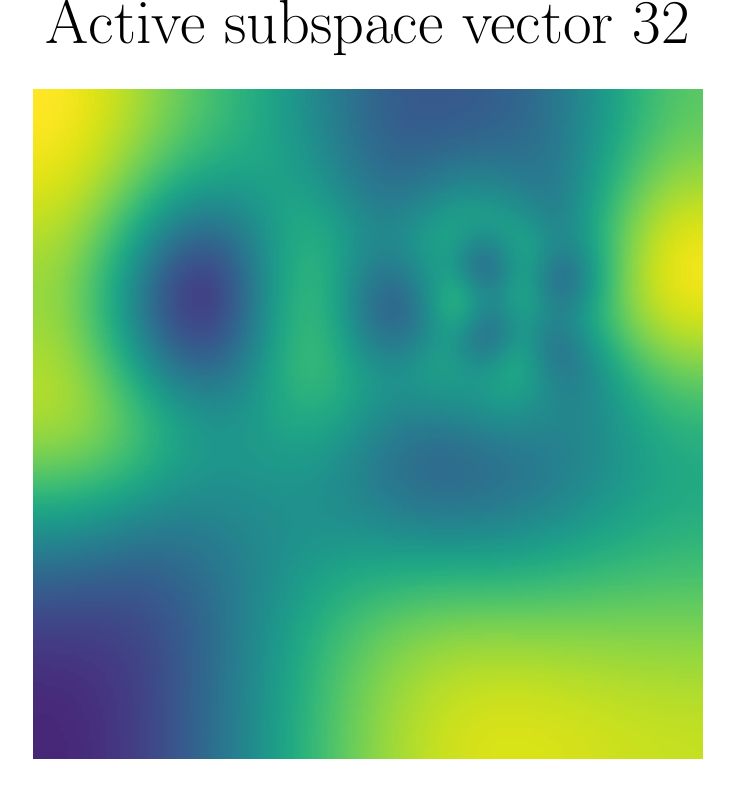}
\end{subfigure}
\begin{subfigure}{0.25\textwidth}
\center
\includegraphics[width=\textwidth]{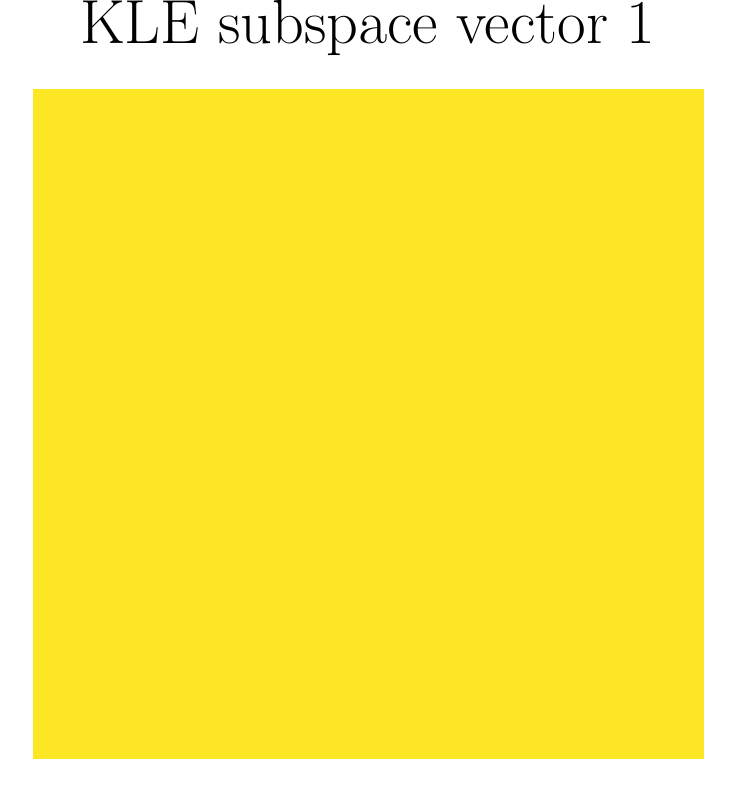}
\end{subfigure}%
\begin{subfigure}{0.25\textwidth}
\center
\includegraphics[width=\textwidth]{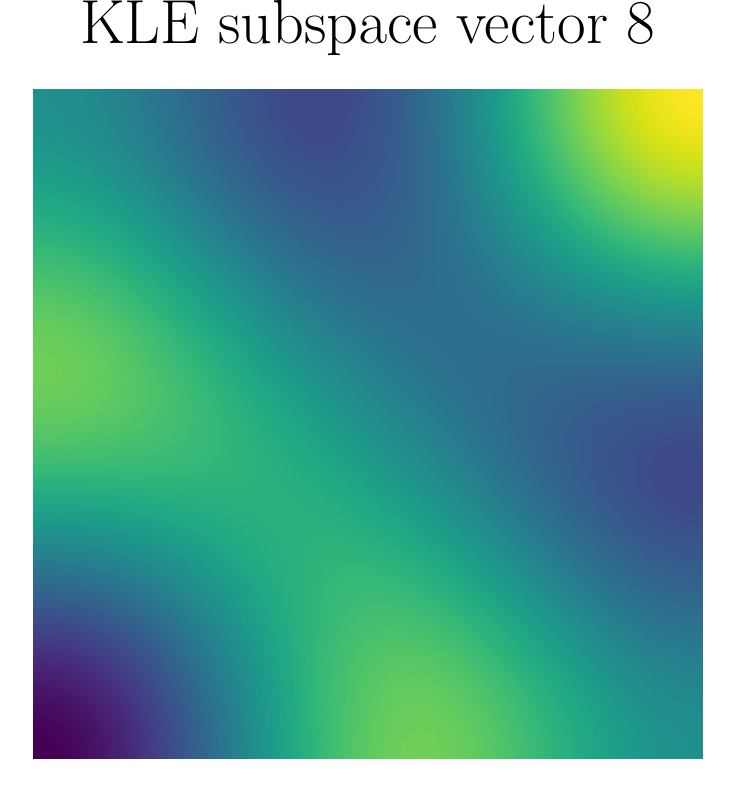}
\end{subfigure}%
\begin{subfigure}{0.25\textwidth}
\center
\includegraphics[width=\textwidth]{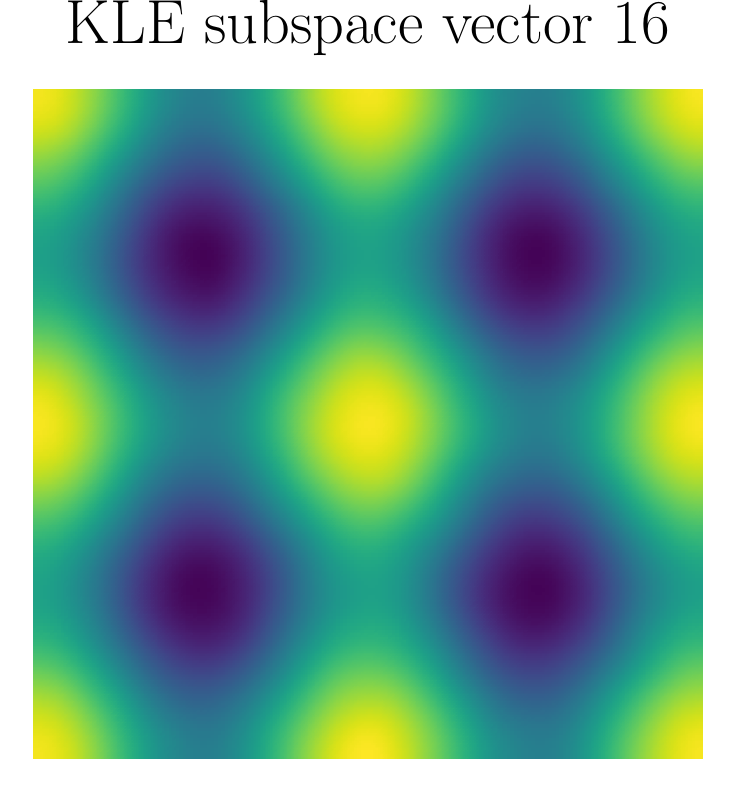}
\end{subfigure}%
\begin{subfigure}{0.25\textwidth}
\center
\includegraphics[width=\textwidth]{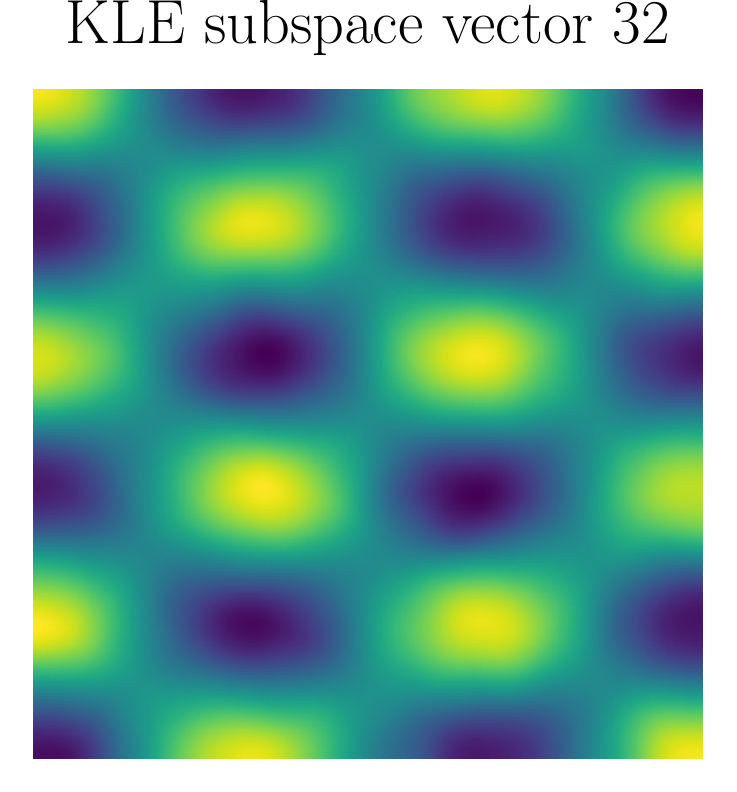}
\end{subfigure}
\end{minipage}%
\begin{minipage}{0.15\textwidth}
\includegraphics[scale = 0.6]{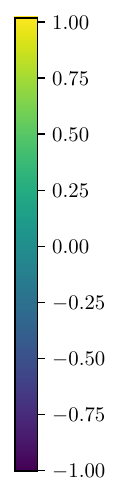}
\end{minipage}
\caption{AS and KLE eigenvectors for $\gamma =0.1, \delta = 1.0$}
\label{as_kle_vectors_confusion}
\end{figure}

In Figure \ref{pod_eigs_and_errors_confusion}, the POD spectra agree in the dominant modes and have similar qualitative decay. They however begin to diverge in the small modes, the finer discretization contains more information in the tail of the spectrum. The input--output error projection errors in Figure \ref{pod_eigs_and_errors_confusion} are sample average approximations of equation \ref{input_output_projection_error} computed for AS and KLE. Both use POD for the outputs, and in each case the ranks are taken to be the same for both the input and the output projectors. Since the output projectors are taken to be the same, this allows for a study of how informed the input projectors are. The AS projector contains more information that informs the outputs than KLE, particularly in the first $15-20$ modes, after that the decay rate of the projection errors are roughly the same. 

\begin{figure}[H] 
\begin{subfigure}{0.5\textwidth}
\includegraphics[width = \textwidth]{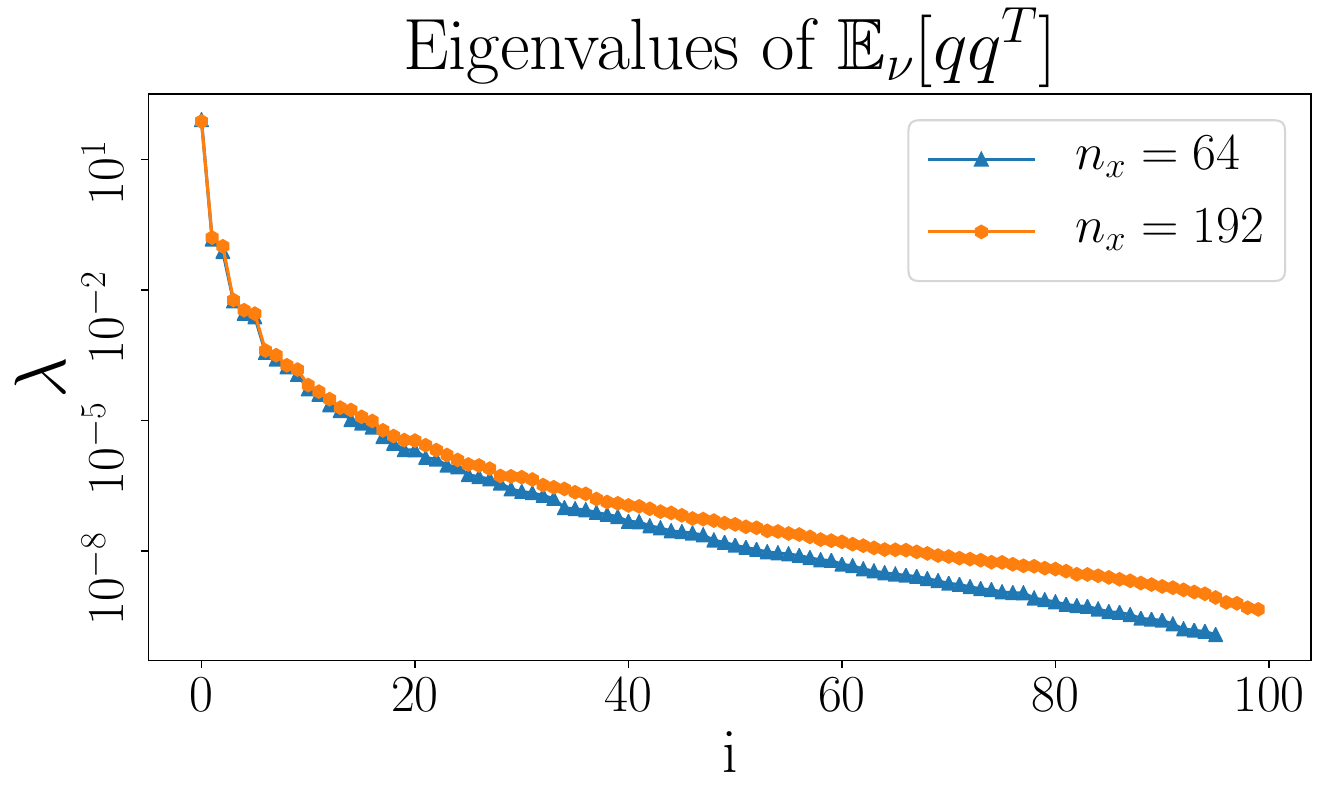}
\end{subfigure}%
\begin{subfigure}{0.5\textwidth}
\includegraphics[width = \textwidth]{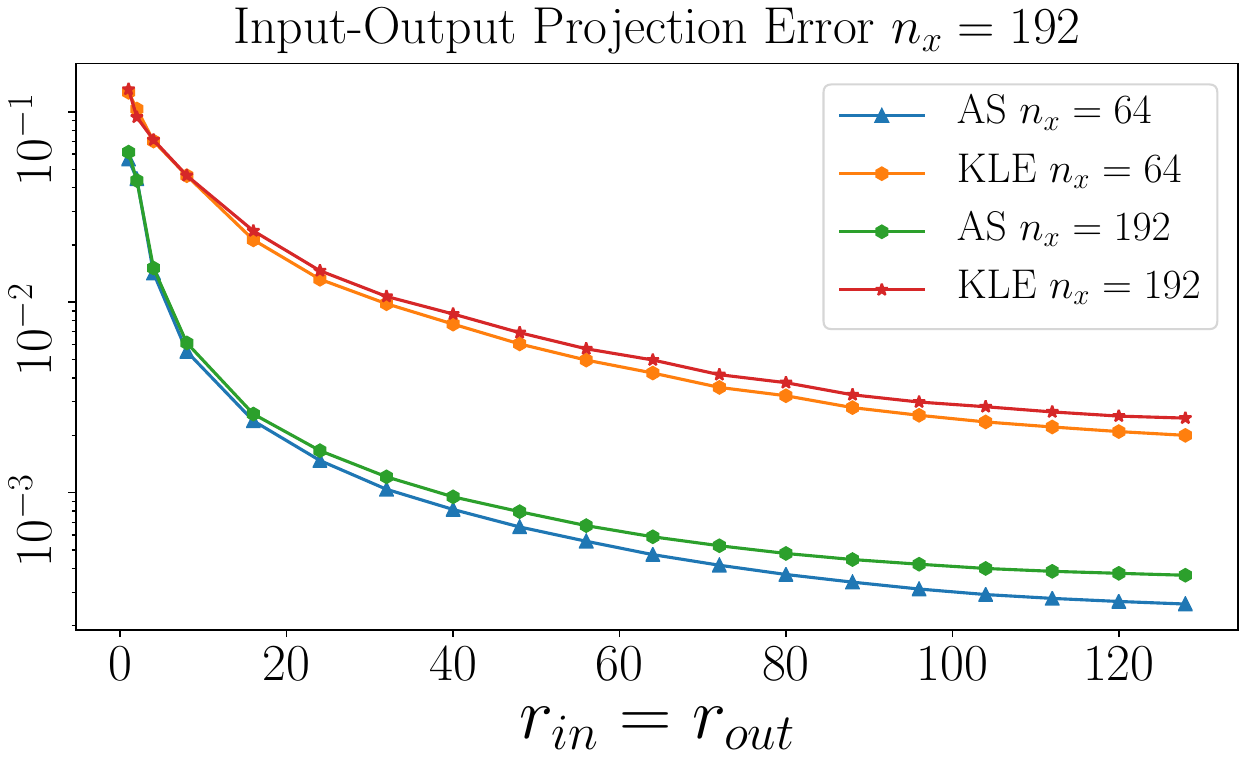}
\end{subfigure}
\caption{Plot of spectra for POD on the two meshes (left), and the input--output projection error (right) for the convection-diffusion-reaction problem}
\label{pod_eigs_and_errors_confusion}
\end{figure}

In what follows we train the projected neural networks and compare against the FS network. Below in Table \ref{table_summary_of_dimensions_confusion}, dimensions for the input parameters $d_M$, and neural network weight dimension $d_W$ for each network are summarized.

\begin{table}[H] 
\center
\begin{tabular}{|l||l|l|}
\hline
$d_M$             &  $4,225$ & $37,249$  \\
\hline
\hline (DIPNet, KLE, RS) projected network $r = 8$  &  $1,124$ &  $1,124$ \\
\hline (DIPNet, KLE, RS) projected network $r = 32$  &  $6,500$ &  $6,500$ \\
\hline (DIPNet, KLE, RS) projected network $r = 128$  &  $49,740$ &  $49,740$ \\
\hline FS network  & $442,800$ & $3,745,200$  \\
\hline
\end{tabular}
\caption{Neural network weight dimensions and parameter dimension for different meshes.}
\label{table_summary_of_dimensions_confusion}
\end{table}

For a first set of results we compare the rank $r = 8$ projected networks with the FS network for the coarse mesh. In Figure \ref{confusion_fixed_rank_coarse}, the DIPNet performs better than all three other networks. The KLE network performs about $2\%$ worse in generalization accuracy, both of these networks perform about the same for different initial guesses for the inner dense layer weights. The RS network performs better than the FS network, which suggests that the reduced dimension architecture has benefits in this regime, however it is very sensitive to the initial guesses for the weights as indicated by the one standard deviation error bars. The DIPNet and KLE networks perform significantly better than the RS network which suggests that there is definite upside to choosing input and output bases for these networks.

\begin{figure}[H]
\begin{subfigure}{0.5\textwidth}
\includegraphics[width = \textwidth]{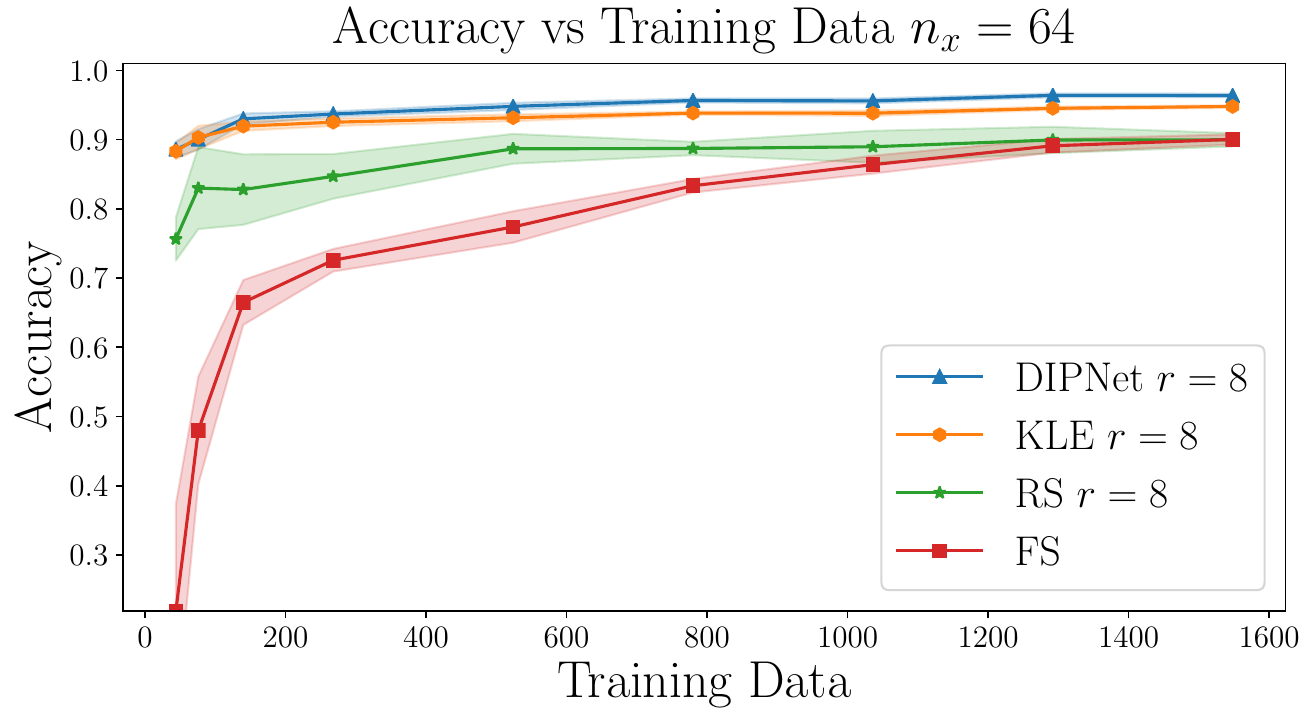}
\end{subfigure}%
\begin{subfigure}{0.5\textwidth}
\includegraphics[width = \textwidth]{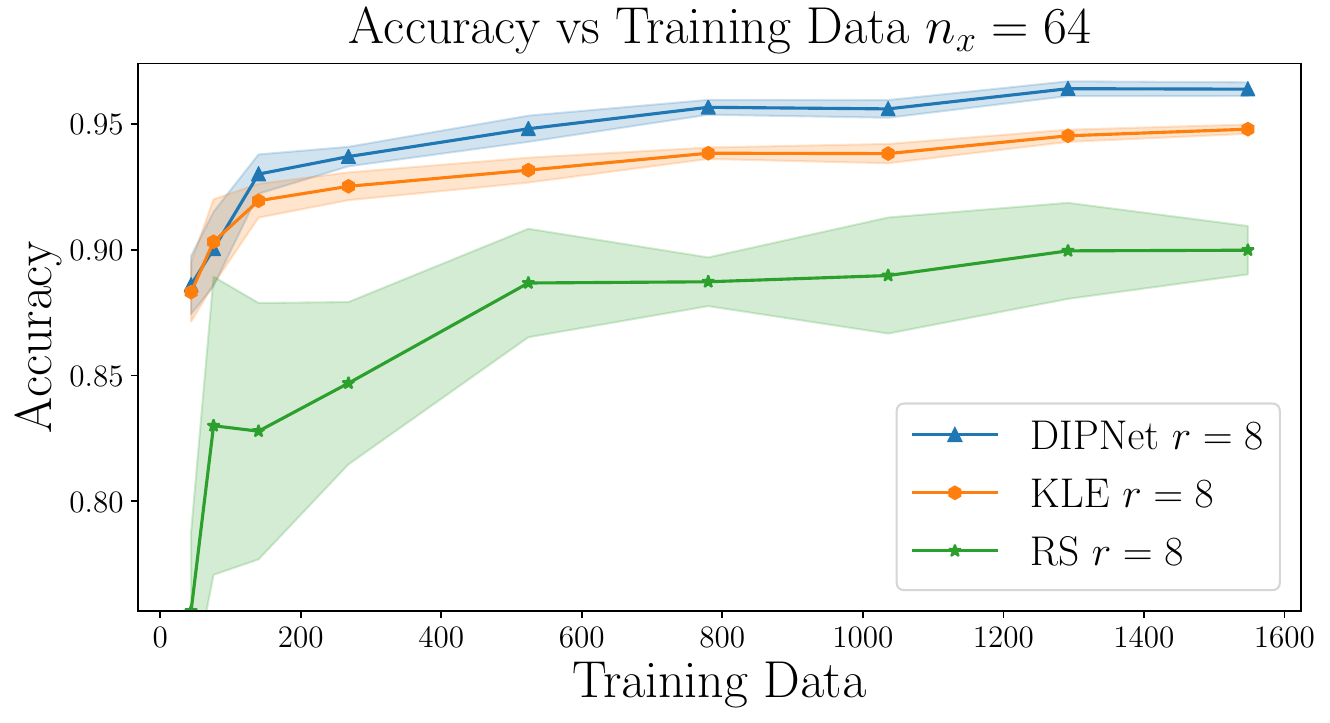}
\end{subfigure}
\caption{Generalization accuracy vs number of training data seen for all networks (left), and just the projected networks (right) for the coarse mesh convection-diffusion-reaction problem.}
\label{confusion_fixed_rank_coarse}
\end{figure}

Figure \ref{confusion_fixed_rank_fine} shows that as the parameter dimension $d_M$ grows, the RS and FS networks become harder to train, while the DIPNet and KLE networks perform comparably well to how they did for the coarse mesh. This suggests that there is mesh independent information that can be encapsulated by the AS, KLE and POD eigenvectors.

\begin{figure}[H]
\begin{subfigure}{0.5\textwidth}
\includegraphics[width = \textwidth]{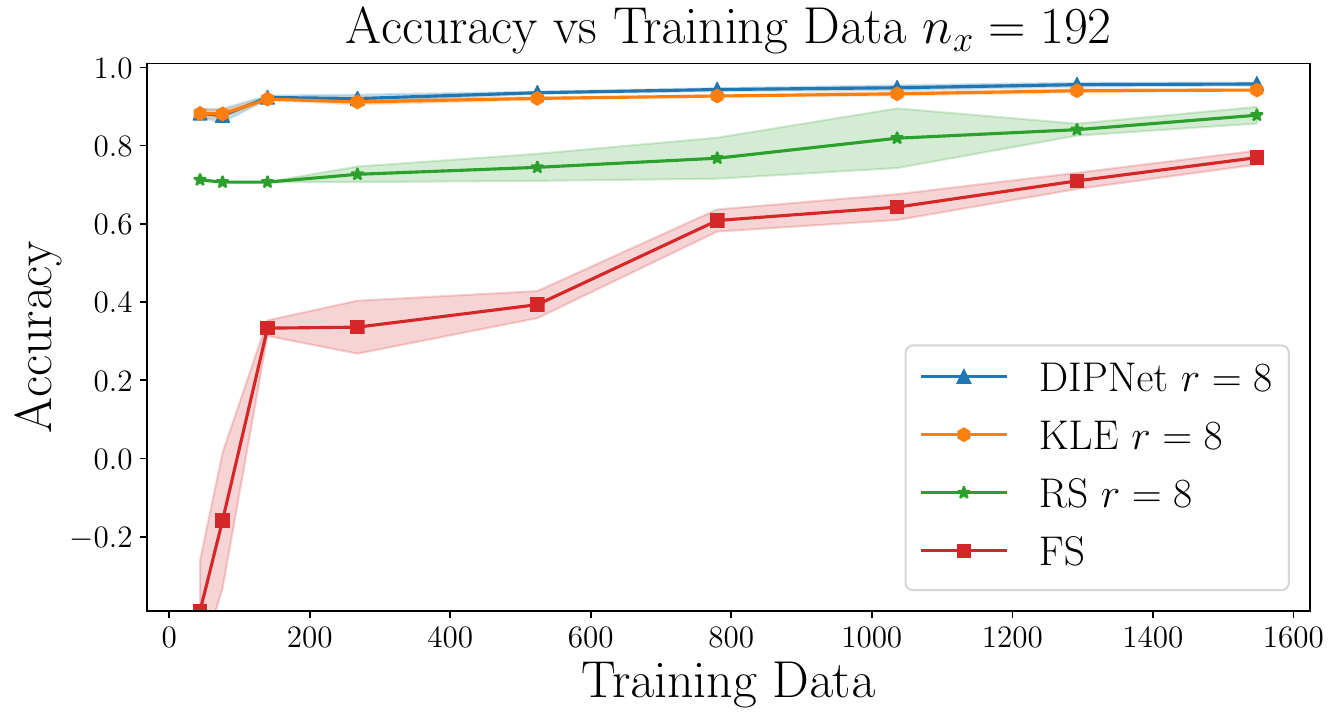}
\end{subfigure}%
\begin{subfigure}{0.5\textwidth}
\includegraphics[width = \textwidth]{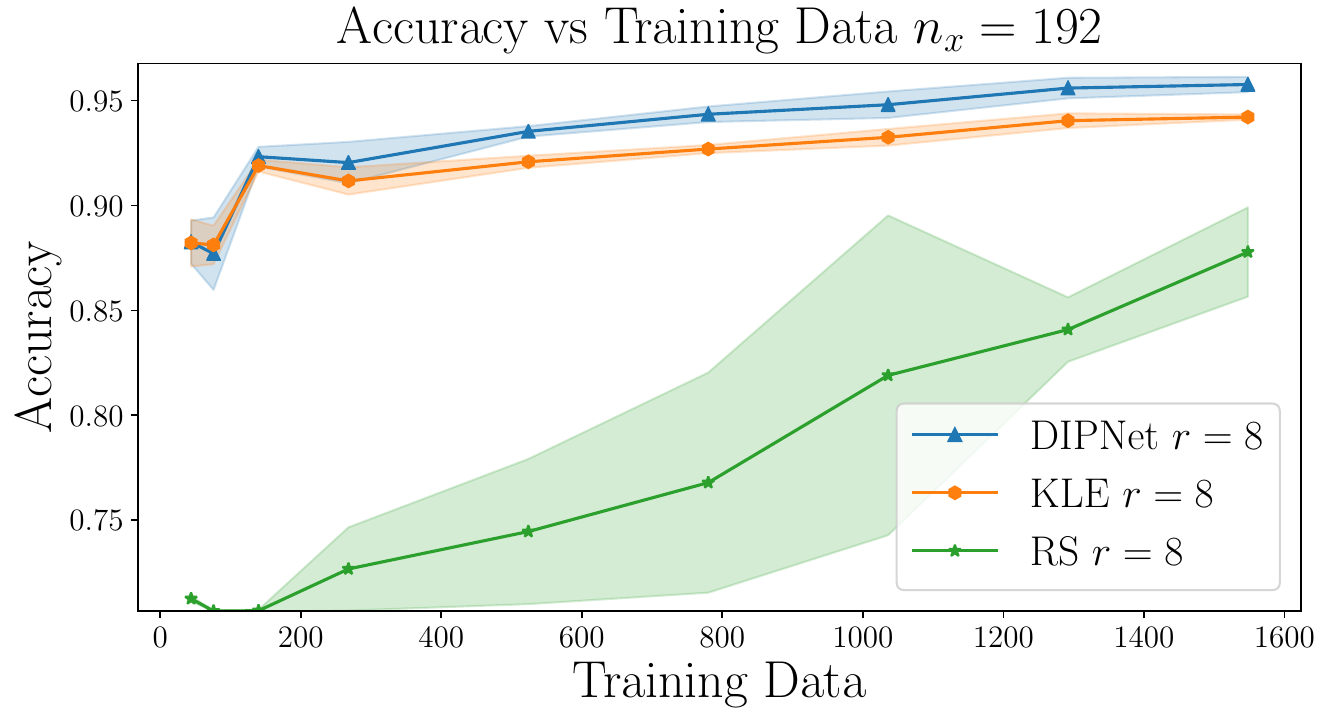}
\end{subfigure}
\caption{Accuracy vs number of training data seen for all networks
  (left), and just the projected networks (right) for the fine mesh
  convection-diffusion-reaction problem.}
\label{confusion_fixed_rank_fine}
\end{figure}

For the next set of numerical results we compare the performance of the DIPNet and KLE projected networks for different choices of the rank $r$. Figure \ref{confusion_fixed_rank_as_kle_small} shows that the advantages of the DIPNet are pronounced when the rank is small, but as the rank $r$ grows the KLE network starts to catch up. While the superior performance of DIPNet in low dimensions agrees with the projection errors observed in Figure \ref{pod_eigs_and_errors_confusion}, the comparable performance of KLE to DIPNet for higher dimensions does not agree. We believe that this has to do with the errors inherent in the neural network representation and training, as discussed in Section \ref{section:discussion_of_errors}. 

As the input bases grow, there will be more overlap between AS and
KLE, and more room for the KLE network to learn patterns captured by
the low dimensional AS basis. Note also that generally, as the input
and output bases grow, the variance with respect to the neural network initial guesses decreases. Similar trends are observed in Figure \ref{confusion_fixed_rank_as_kle_large} for the finer mesh discretization.

\begin{figure}[H]
\begin{subfigure}{0.5\textwidth}
\includegraphics[width = \textwidth]{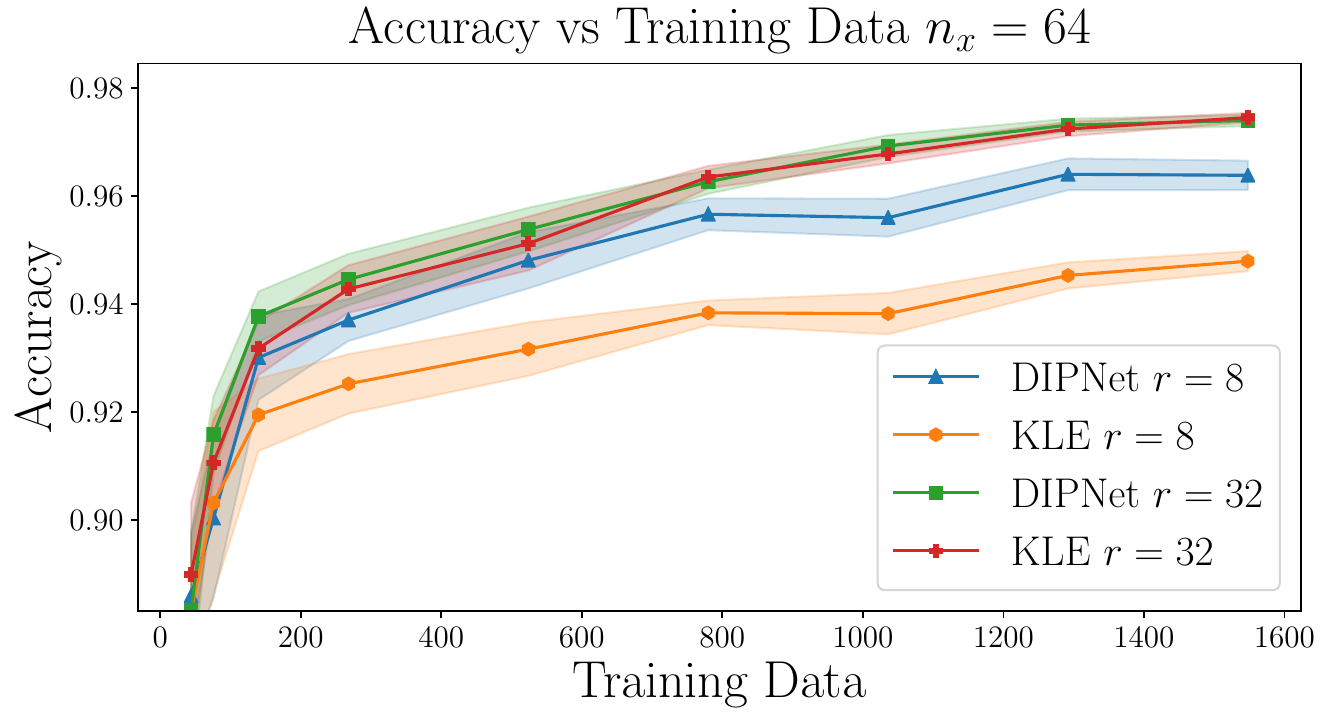}
\end{subfigure}%
\begin{subfigure}{0.5\textwidth}
\includegraphics[width = \textwidth]{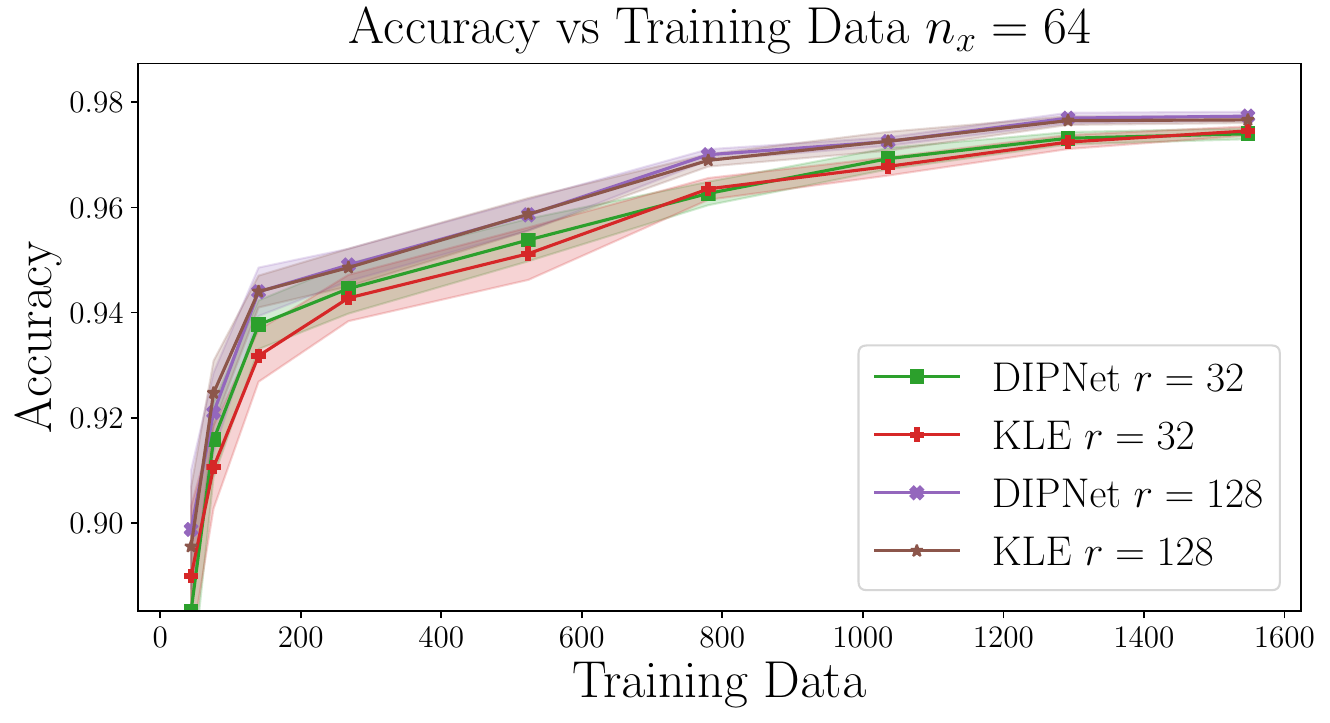}
\end{subfigure}
\caption{Accuracy vs number of training data seen for DIPNet and KLE networks as a function of rank for the coarse mesh convection-diffusion-reaction problem.}
\label{confusion_fixed_rank_as_kle_small}
\end{figure}

\begin{figure}[H]
\begin{subfigure}{0.5\textwidth}
\includegraphics[width = \textwidth]{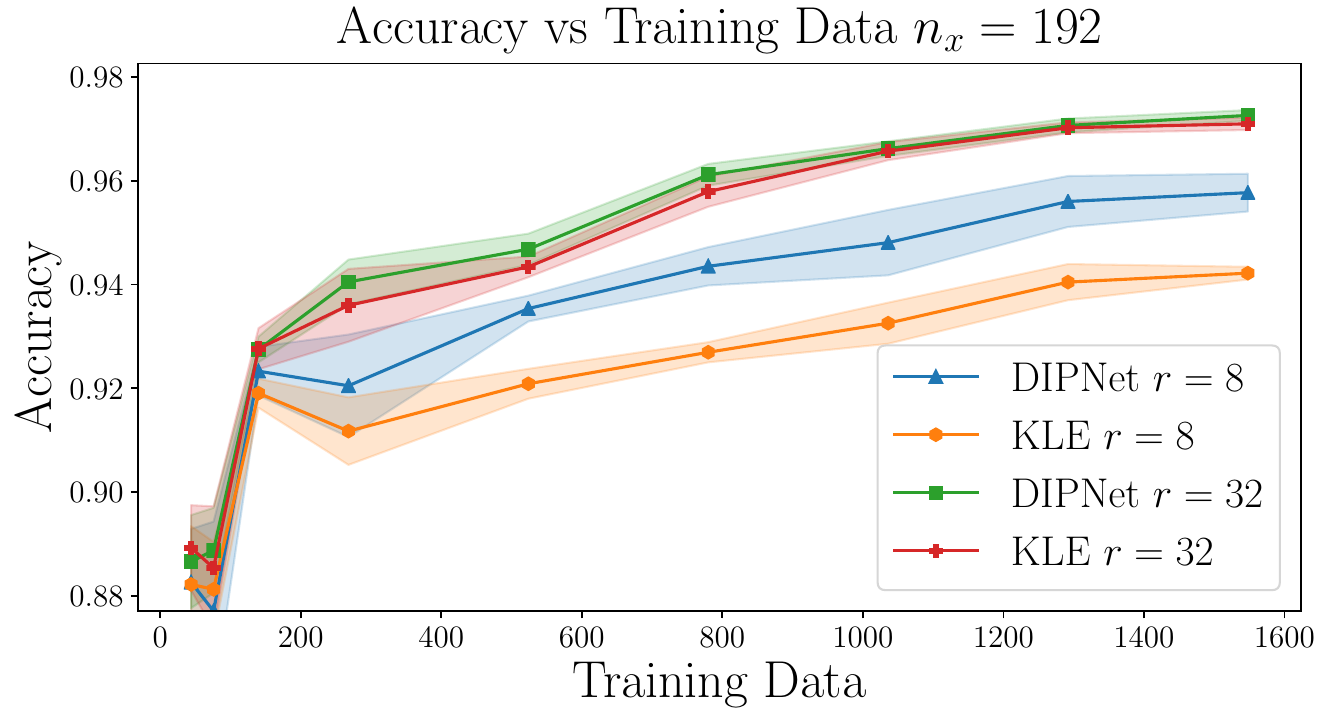}
\end{subfigure}%
\begin{subfigure}{0.5\textwidth}
\includegraphics[width = \textwidth]{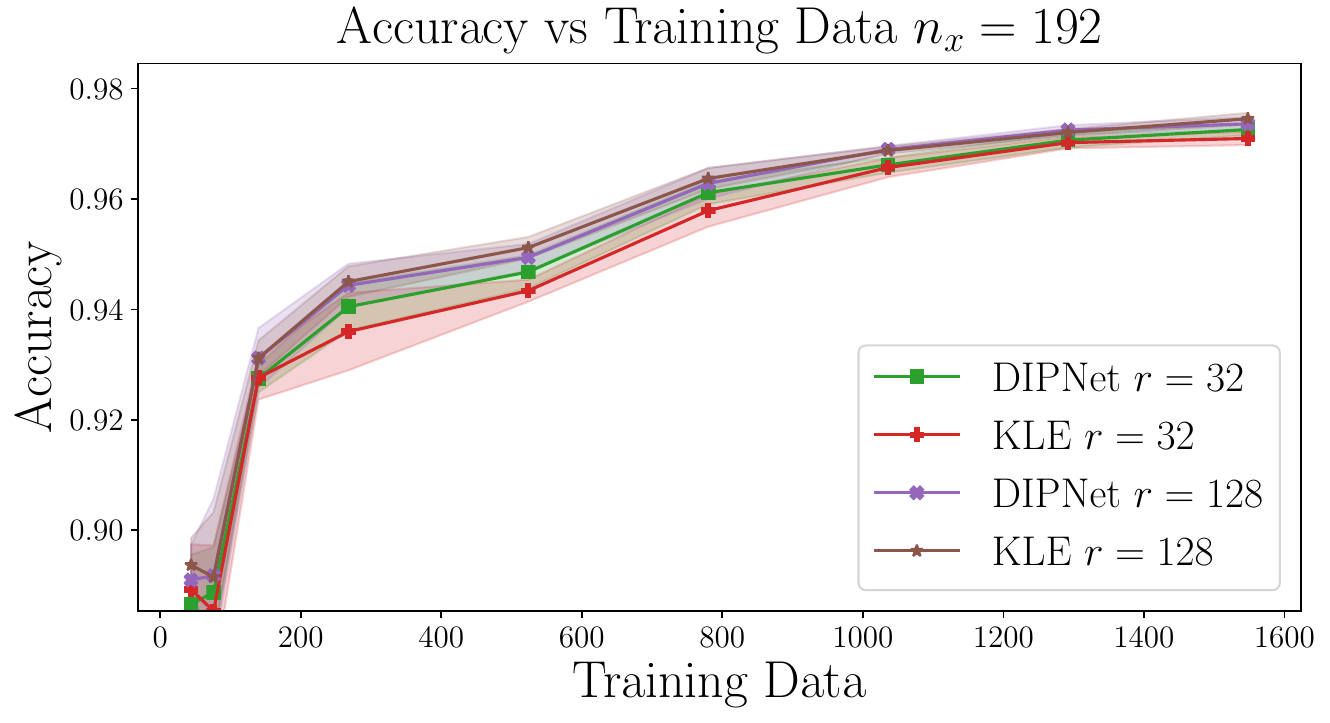}
\end{subfigure}
\caption{Accuracy vs number of training data seen for DIPNet and KLE networks as a function of rank for the fine mesh convection diffusion problem.}
\label{confusion_fixed_rank_as_kle_large}
\end{figure}

\subsection{Helmholtz Problem}

For a second case study, we investigate a 2D \linebreak Helmholtz problem with a
nonlinear dependence on the random field $m$, which represents the
log-prefactor of the wavenumber $k$. The formulation of the problem is
\begin{subequations}
\begin{align}
  - \Delta u  - (k e^m)^2u &= f \quad \text{in } \Omega \\
   \text{PML boundary condition} &\text{ on } \partial \Omega \setminus \Gamma_\text{top} \\
   \nabla u \cdot n &= 0  \text{ on }  \Gamma_\text{top} \\
  q(m) = Bu(m) &= [u(\mathbf{x}^{(i)},m)] \quad \text{at } \mathbf{x}^{(i)} \in \Omega\\
  \Omega &= (0,3)^2.
\end{align}
\end{subequations}

The PML boundary condition on the sides and bottom of the domain
simulates truncation of a semi-infinite domain; waves can only be
reflected on the top surface. Due to the PML domain truncation, the
PDE for the Mat\'{e}rn covariance for the parameter distribution has
Robin boundary conditions to eliminate boundary artifacts
\cite{DaonStadler18}. The problem has a single source at a point
$(0.775,2.85)$, and the $100$ observation points are clustered in a
box $(0.575,0.975)\times (2.75,2.95)$, none of the observation points
coincide with the source. Since the PDE state variable here is a
velocity field, the outputs have dimension $d_Q = 200$ for this
problem. The wave number for this problem is $9.118$. The Gaussian
Mat\'{e}rn distribution $\nu$ is parametrized by $\gamma = 1.0,\delta
= 5.0$. We consider two meshes for this problem again, $n_x = n_y =
64$ and $128$. We again start by investigating the eigenvalue
decompositions for AS, KLE and POD, and the input--output projection
errors given by \ref{input_output_projection_error}.

In Figure \ref{as_kle_spectra_helmholtz}, the AS spectra agree between the two mesh discretization and are roughly mesh independent. Note that for this problem the decay of the dominant modes of the AS spectra compared to KLE are much more pronounced than in Figure \ref{as_kle_spectra_confusion}. The KLE spectra agree between the two mesh discretization and are roughly mesh independent.

\begin{figure}[H]
\begin{subfigure}{0.5\textwidth}
\center
\includegraphics[width = \textwidth]{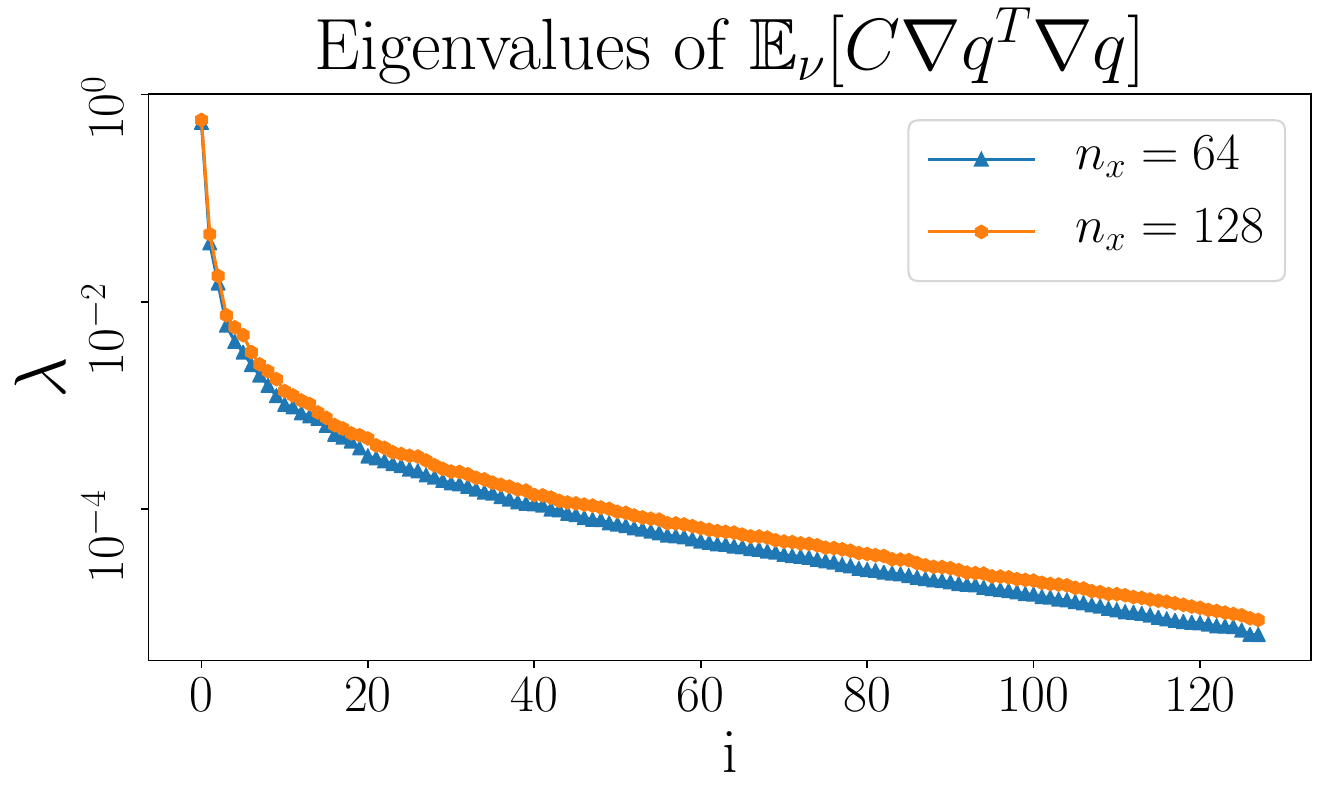}
\end{subfigure}%
\begin{subfigure}{0.5\textwidth}
\center
\includegraphics[width = \textwidth]{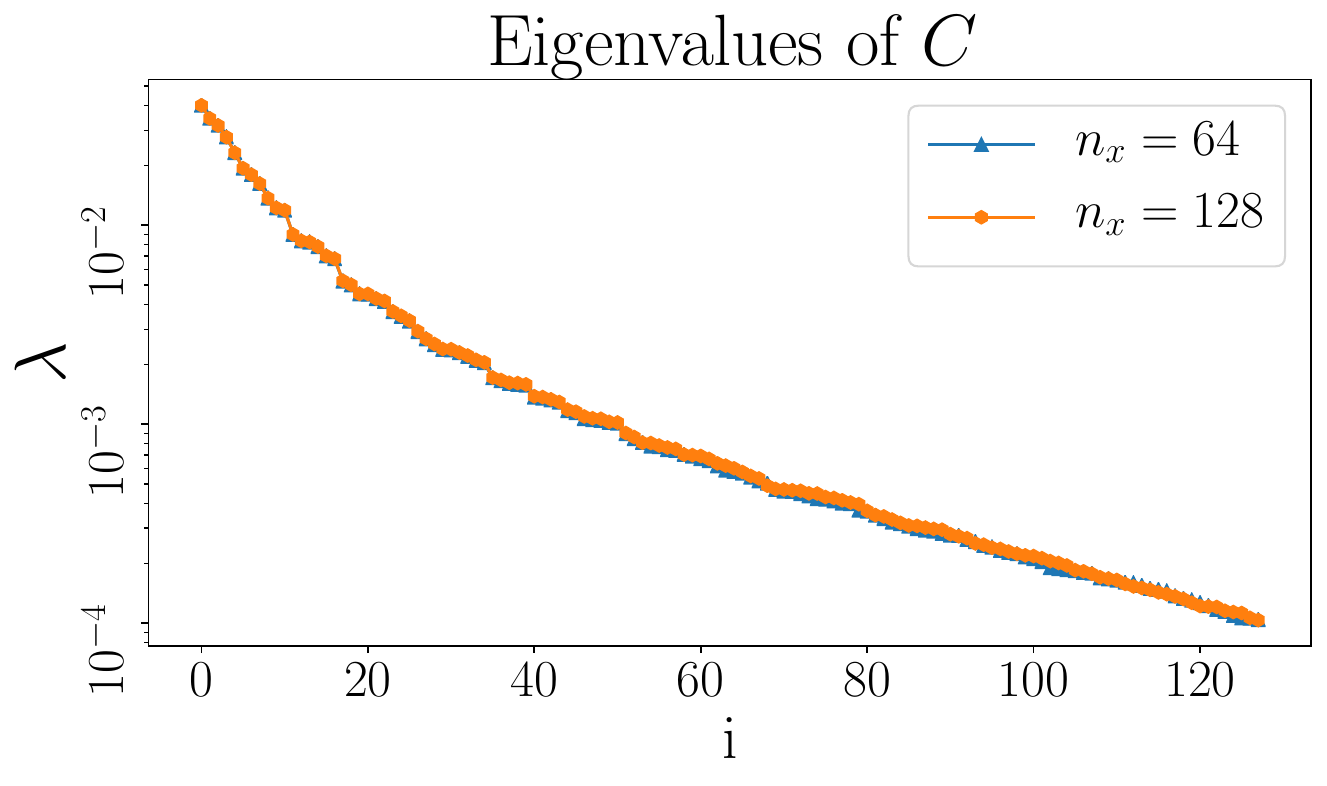}
\end{subfigure}%
\caption{Active subspace and KLE spectra for $\gamma =1.0, \delta = 5.0$}
\label{as_kle_spectra_helmholtz}
\end{figure}

The dominant AS vectors are localized to the part of the domain where the observations are present. The first mode captures strictly locally supported information of the uncertain parameters, and the higher modes start to capture higher frequency effects inherent to the Helmholtz problem. The KLE eigenvectors again correspond to the eigenvectors arising from separation of variables, note that for this problem the PDE forward map is dissimilar to the covariance operator, so this basis is not an optimal representation of the PDE state, and therefore a linear restriction of the PDE state. These eigenvectors again do not pick up local information about the mapping $m \mapsto q$.

\begin{figure}[H] 
\begin{minipage}{0.85\textwidth}
\begin{subfigure}{0.25\textwidth}
\center
\includegraphics[width=\textwidth]{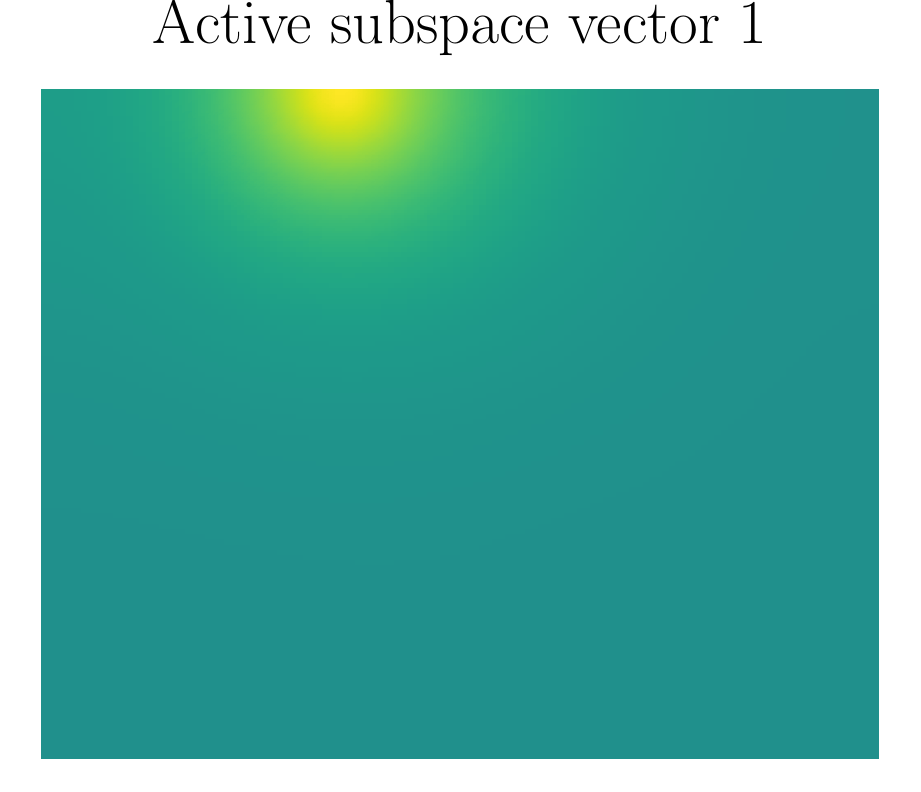}
\end{subfigure}%
\begin{subfigure}{0.25\textwidth}
\center
\includegraphics[width=\textwidth]{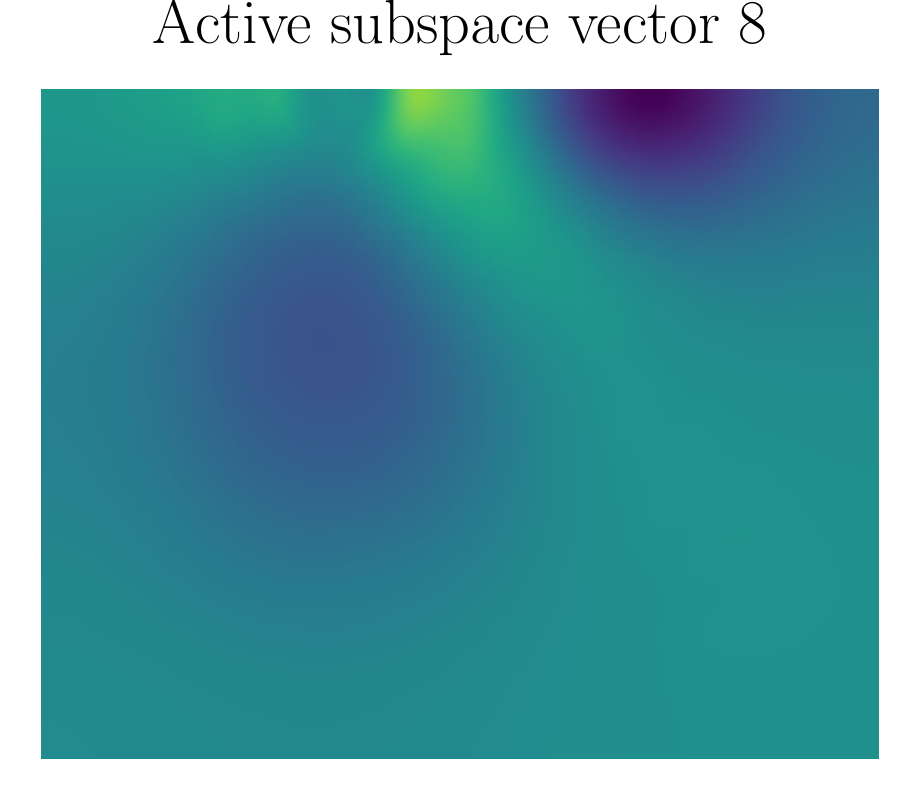}
\end{subfigure}%
\begin{subfigure}{0.25\textwidth}
\center
\includegraphics[width=\textwidth]{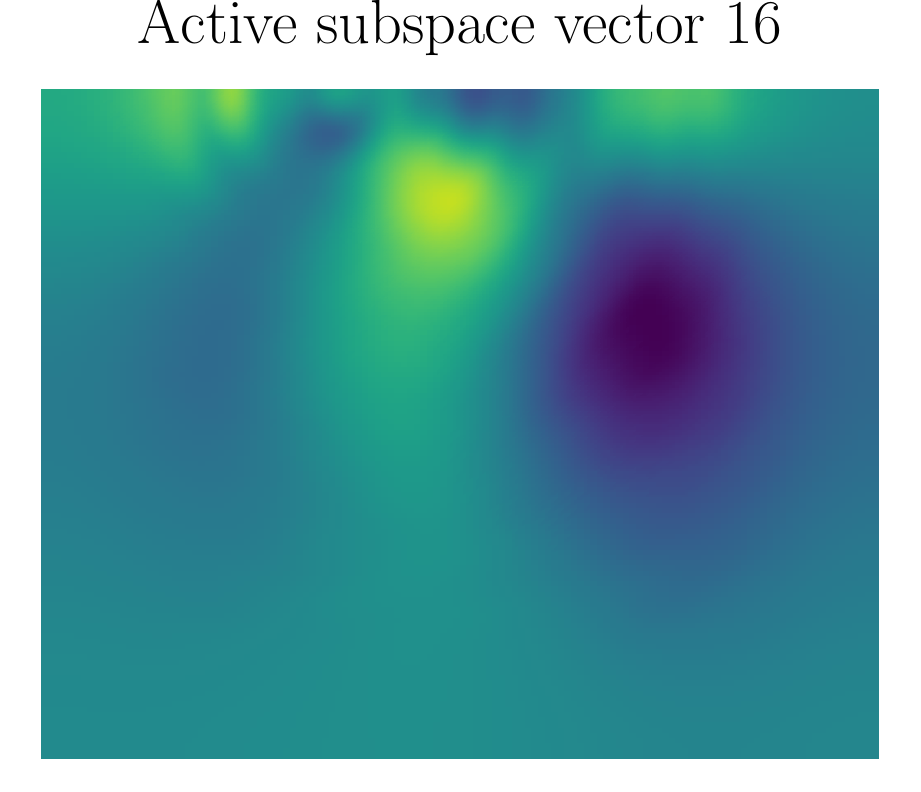}
\end{subfigure}%
\begin{subfigure}{0.25\textwidth}
\center
\includegraphics[width=\textwidth]{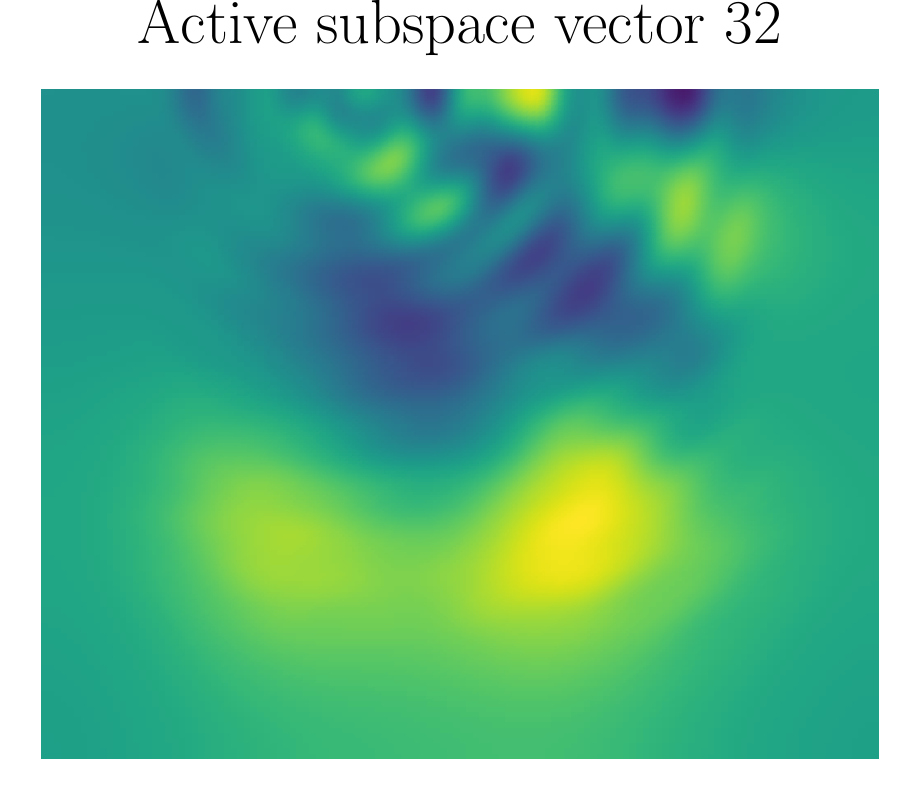}
\end{subfigure}
\begin{subfigure}{0.25\textwidth}
\center
\includegraphics[width=\textwidth]{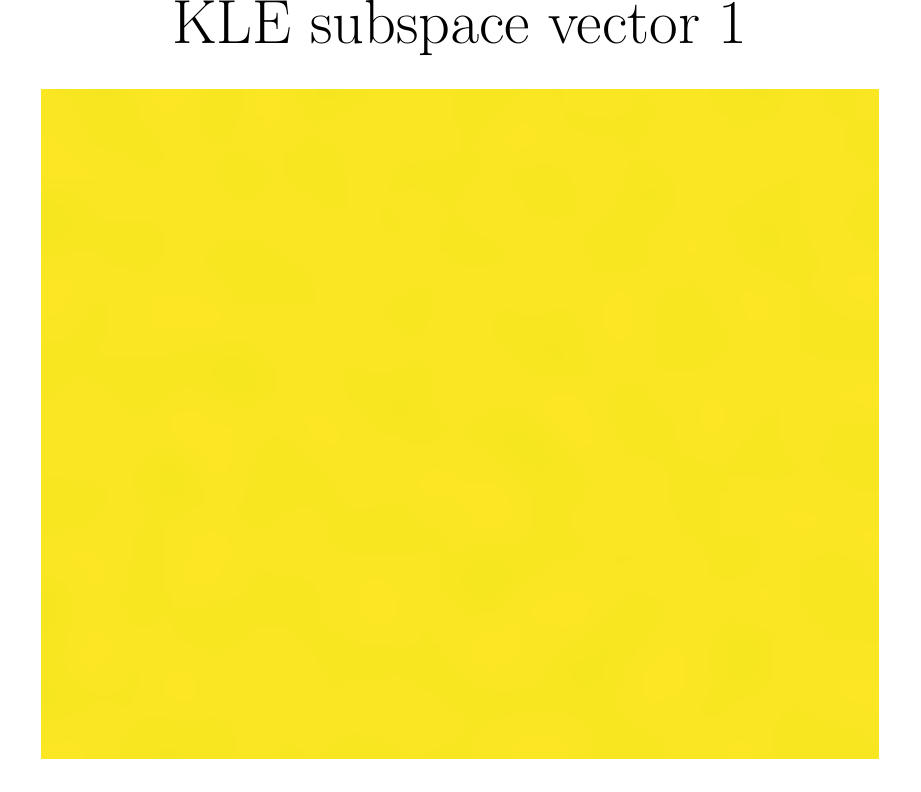}
\end{subfigure}%
\begin{subfigure}{0.25\textwidth}
\center
\includegraphics[width=\textwidth]{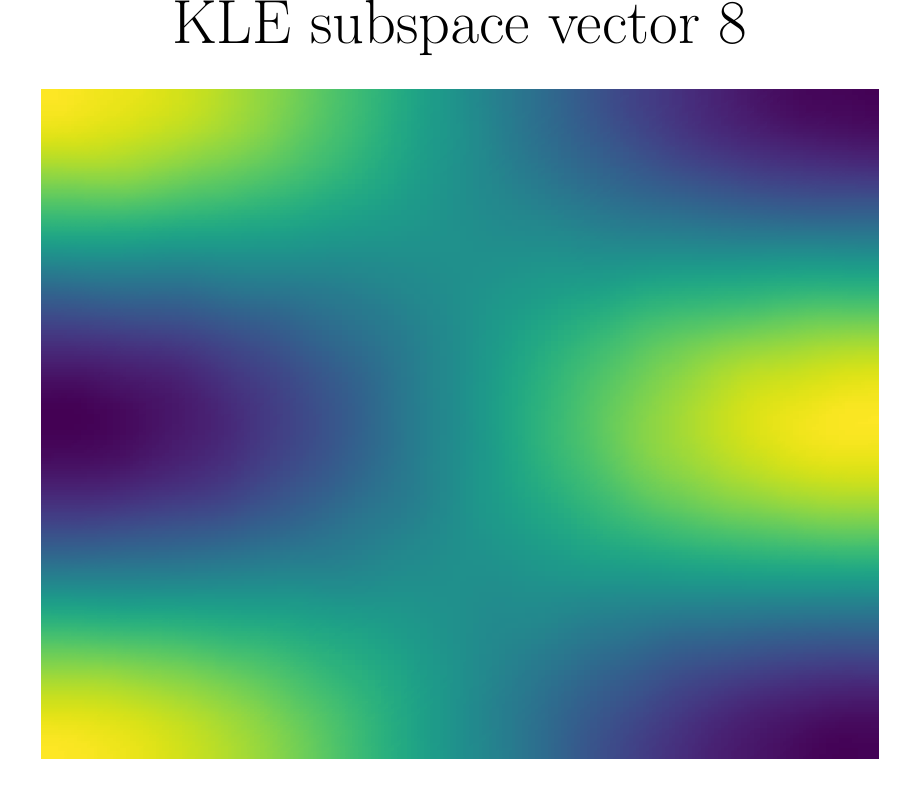}
\end{subfigure}%
\begin{subfigure}{0.25\textwidth}
\center
\includegraphics[width=\textwidth]{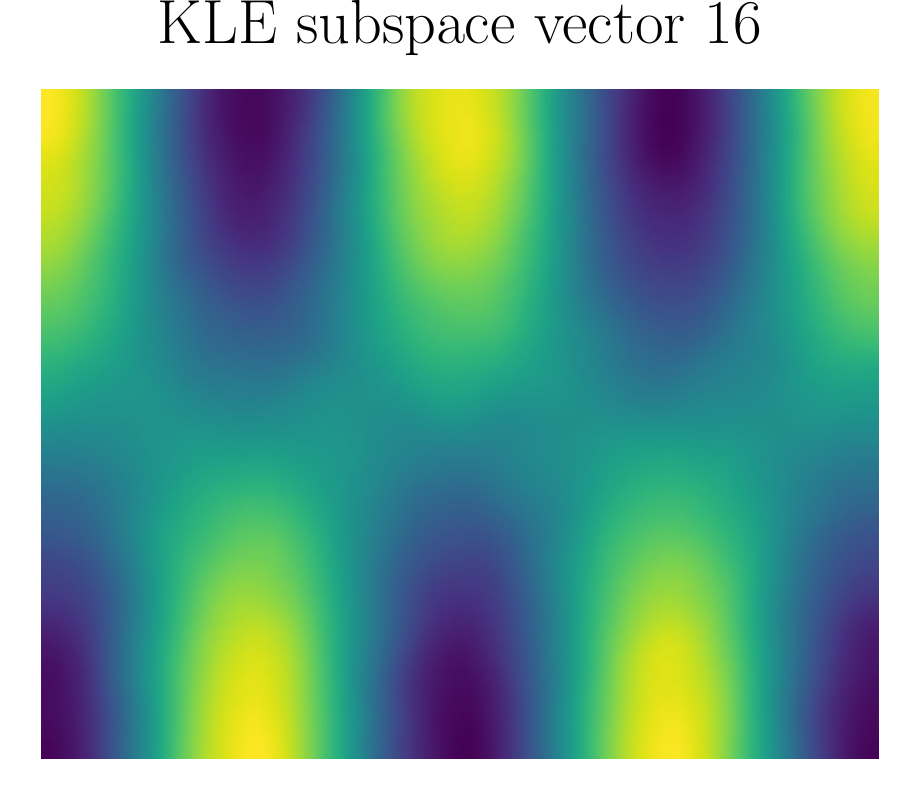}
\end{subfigure}%
\begin{subfigure}{0.25\textwidth}
\center
\includegraphics[width=\textwidth]{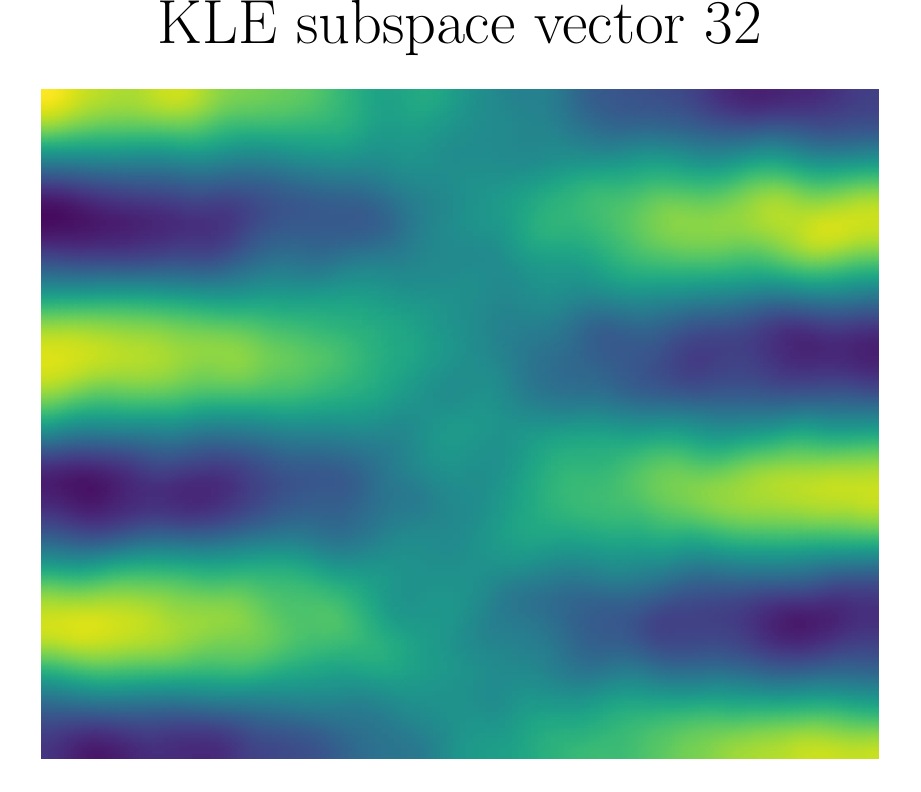}
\end{subfigure}
\end{minipage}%
\begin{minipage}{0.15\textwidth}
\includegraphics[scale = 0.5]{figures/plot_onlycbar.pdf}
\end{minipage}
\caption{AS and KLE eigenvectors for $\gamma =1.0, \delta = 5.0$}
\label{as_kle_vectors_helmholtz}
\end{figure}

In Figure \ref{pod_eigs_and_errors_helmholtz}, the POD spectra agree
in the dominant modes and have similar qualitative decay. They however
begin to diverge in the small modes---even more so than in the
convection-diffusion-reaction problem. The finer discretization
contains more information in the tail of the spectrum. The AS
projector contains more information about the outputs than KLE,
particularly in the first $15-20$ modes, after that the decay rate of
the projection errors are roughly the same.

\begin{figure}[H]
\begin{subfigure}{0.5\textwidth}
\includegraphics[width = \textwidth]{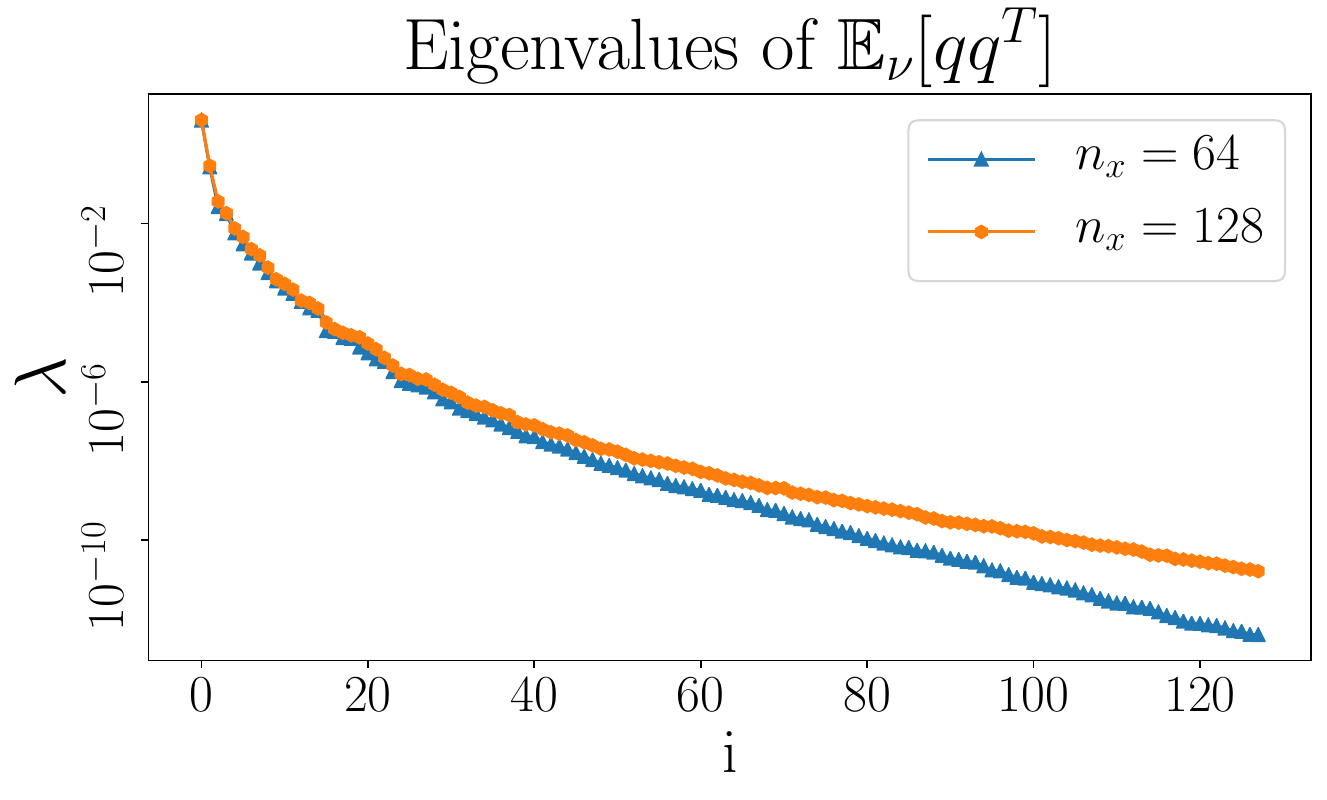}
\end{subfigure}%
\begin{subfigure}{0.5\textwidth}
\includegraphics[width = \textwidth]{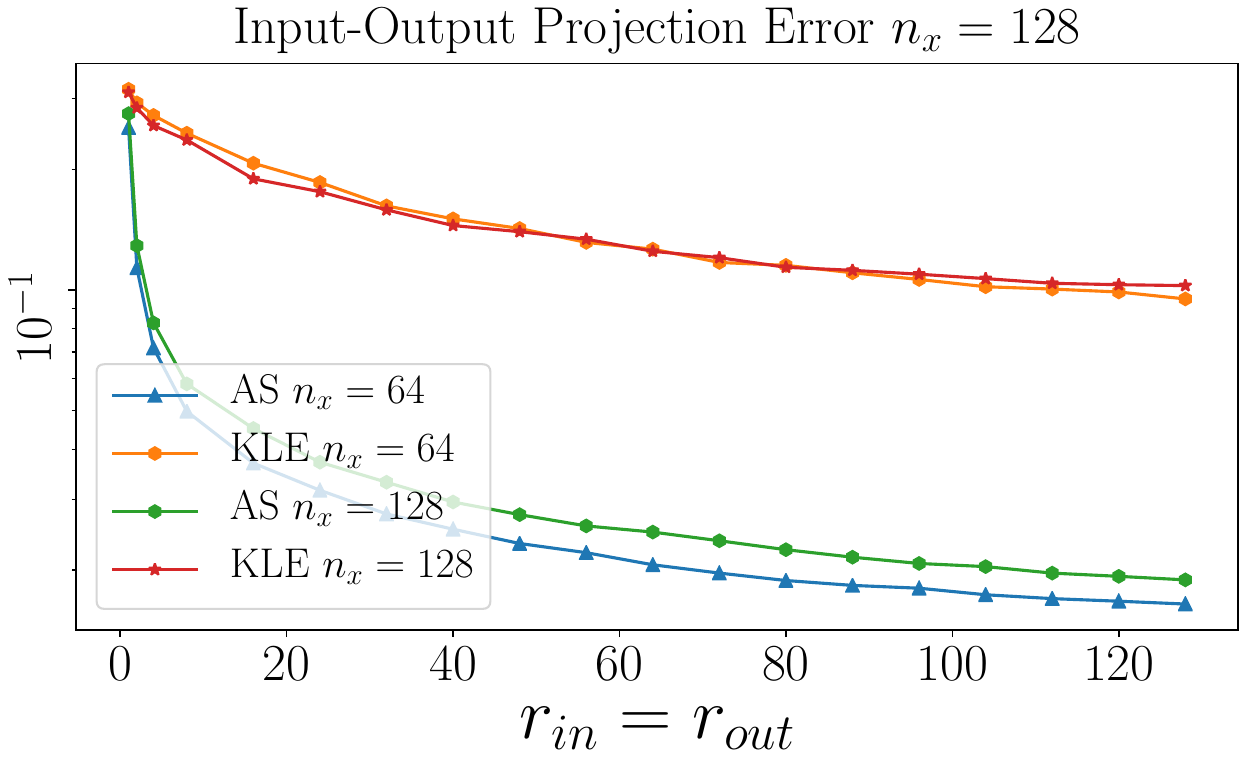}
\end{subfigure}
\caption{Plot of spectra for POD on the two meshes (left), and the input--output projection error (right) for the Helmholtz problem}
\label{pod_eigs_and_errors_helmholtz}
\end{figure}

In what follows we train the projected neural network networks and compare against the FS networks. Below in Table \ref{table_summary_of_dimensions_helmholtz}, dimensions for the input parameters $d_M$, and $d_W$ for each network are summarized.

\begin{table}[H]
\center
\begin{tabular}{|l||l|l|}
\hline
$d_M$             &  $4,225$ & $16,641$  \\
\hline
\hline (DIPNet, KLE, RS) projected network $r = 8$  &  $2,024$ &  $2,024$ \\
\hline (DIPNet, KLE, RS) projected network $r = 32$  &  $9,800$ &  $9,800$ \\
\hline (DIPNet, KLE, RS) projected network $r = 64$  &  $25,544$ &  $25,544$ \\
\hline FS network  & $925,600$ & $3,408,800$  \\
\hline
\end{tabular}
\caption{Neural network weight dimensions and parameter dimension for the different meshes used.}
\label{table_summary_of_dimensions_helmholtz}
\end{table}

For a first set of results we again compare the rank $r = 8$ projected networks with the FS network for the coarse mesh. In Figure \ref{helmholtz_fixed_rank_coarse}, the DIPNet performs better than all three other networks. For this problem the KLE network performs about $10\%$ worse in generalization accuracy, both of these networks perform about the same for different initial guesses for the inner dense layer weights. The RS and KLE networks performs better than the FS network in the low data regime, but the FS network begins to outperform these two networks when more training data are available. The FS network nearly catches up to the DIPNet $r = 8$ network in the high data limit. The benefit of the DIPNet and FS networks over the KLE and RS networks seems to be their ability to resolve more oscillatory information than the KLE and RS networks. The FS network is able to do this when more data are available, since it has many weights to be fit, the DIPNet can do this in both the limited data regime and when more data are available, since the AS basis is able to represent these modes. The KLE network represents smooth modes of the parameter distribution covariance, which are not as useful when the parametric mapping is dominated by highly oscillatory modes as in this example. 

The benefit of KLE for the convection-diffusion-reaction problem is that the PDE problem was similar to the PDE that shows up in the Gaussian Mat\'{e}rn covariance matrix. The limitations of KLE show up for the Helmholtz problem, where the forward PDE mapping is dissimilar from the parameter distribution covariance operator $C$.

\begin{figure}[H]
\begin{subfigure}{0.5\textwidth}
\includegraphics[width = \textwidth]{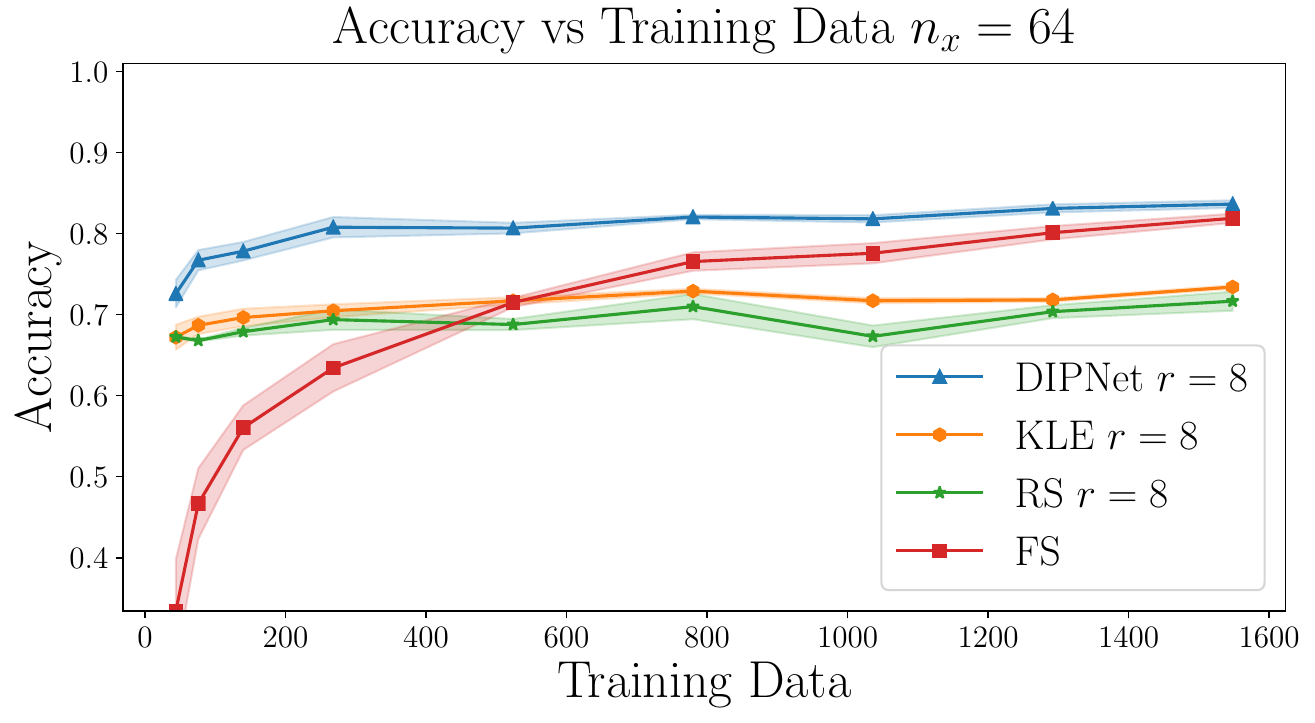}
\end{subfigure}%
\begin{subfigure}{0.5\textwidth}
\includegraphics[width = \textwidth]{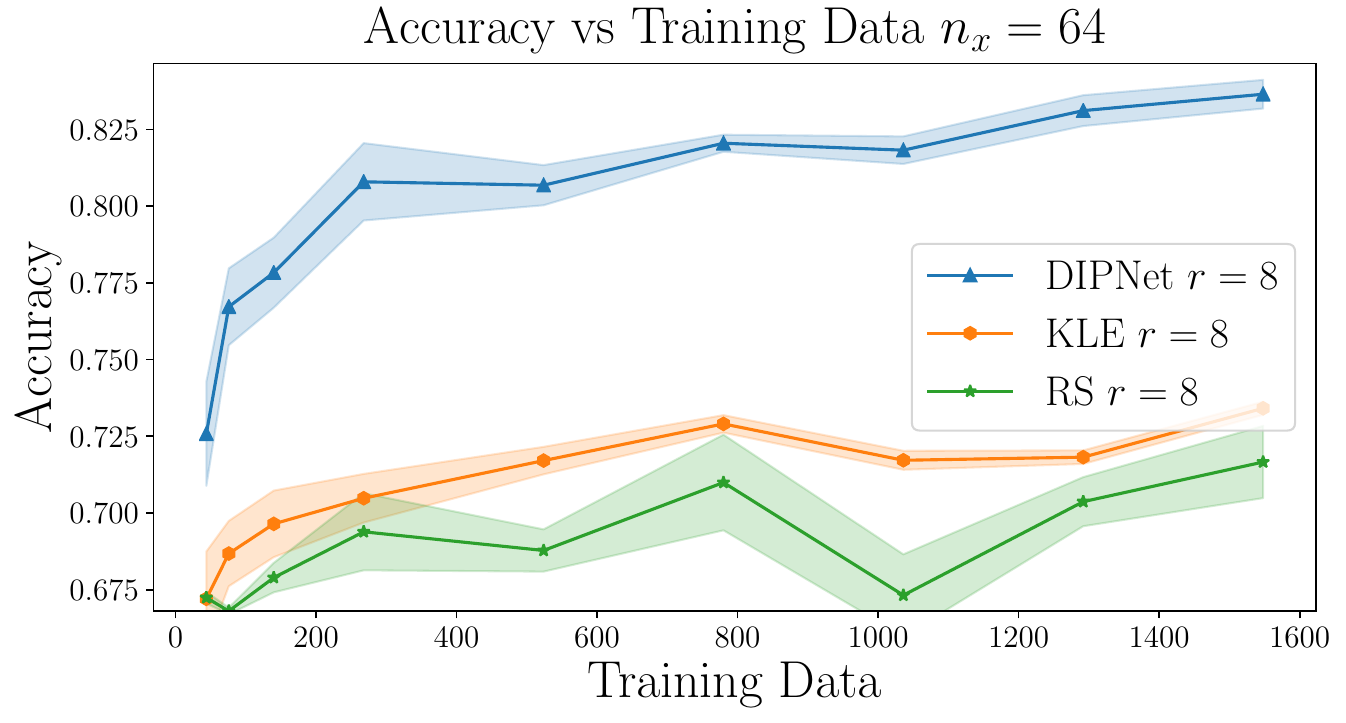}
\end{subfigure}
\caption{Accuracy vs number of training data seen for all networks (left), and just the projected networks (right) for the coarse mesh Helmholtz problem.}
\label{helmholtz_fixed_rank_coarse}
\end{figure}

Figure \ref{helmholtz_fixed_rank_fine} shows that as the parameter dimension $d_M$ grows, the RS and FS networks become harder to train, while the DIPNet and KLE networks perform comparably well to how they did for the coarse mesh. The FS network again nearly catches up the the DIPNet $r = 8$ in the high data limit.

\begin{figure}[H]
\begin{subfigure}{0.5\textwidth}
\includegraphics[width = \textwidth]{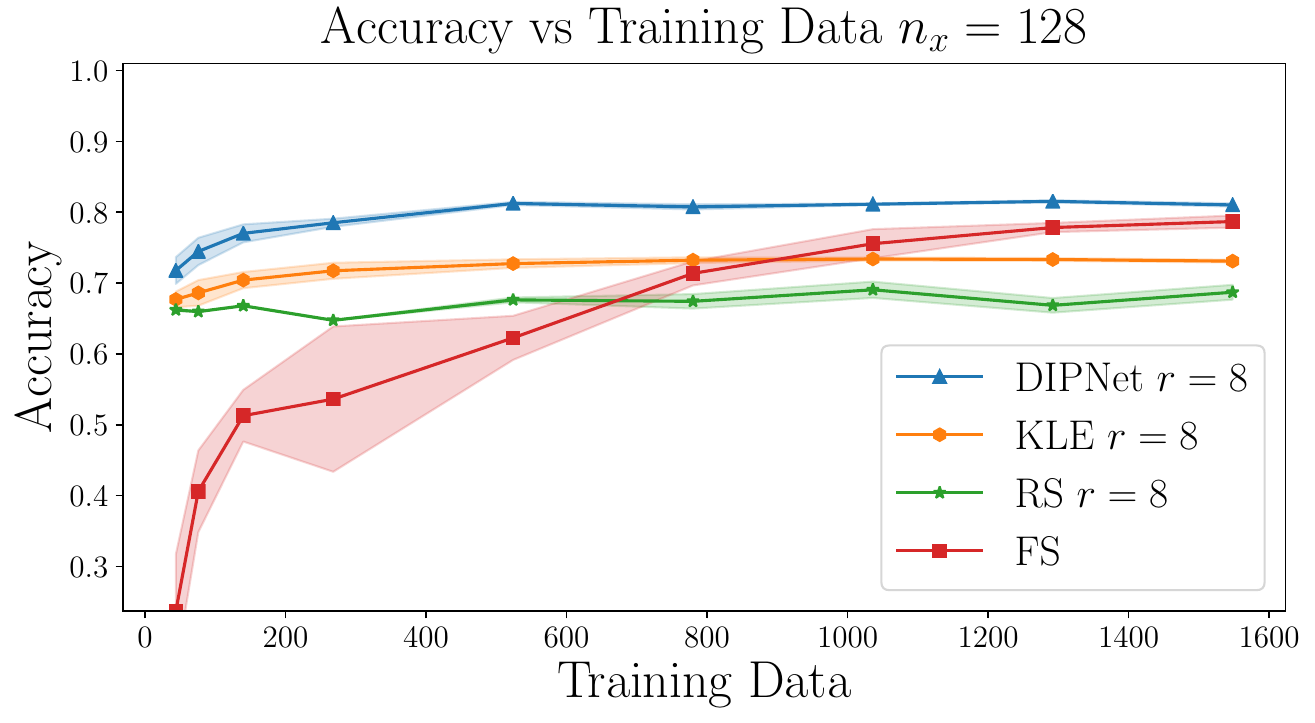}
\end{subfigure}%
\begin{subfigure}{0.5\textwidth}
\includegraphics[width = \textwidth]{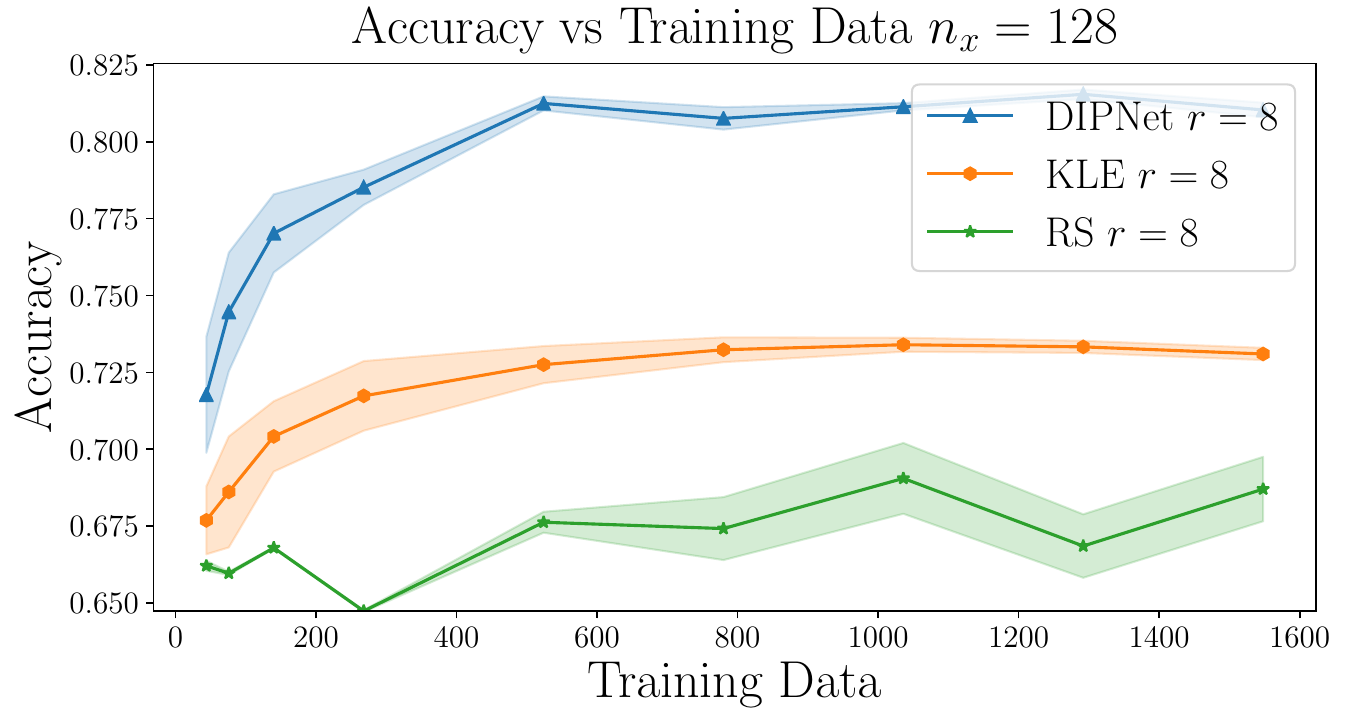}
\end{subfigure}
\caption{Accuracy vs number of training data seen for all networks (left), and just the projected networks (right) for the fine mesh Helmholtz problem.}
\label{helmholtz_fixed_rank_fine}
\end{figure}

For the next set of numerical results we again compare the performance of the DIPNet and KLE projected networks for different choices of the rank $r$. Figure \ref{helmholtz_fixed_rank_as_kle_small} shows that for this problem the DIPNet outperforms the KLE networks consistently. The low dimensional ($r=8$) DIPNet performs well for low data, but sees reduced performance as the data dimension improves. The DIPNets see benefited performance when the rank increases to $32$ and $64$. Both KLE and DIPNet observed reduced accuracy for $r = 128$. The Helmholtz data are evidently harder to fit than the convection-diffusion-reaction data, which are less oscillatory as seen by the AS eigenvectors for the two problems (Figures \ref{as_kle_vectors_confusion} and \ref{as_kle_vectors_helmholtz}). When the dimension of the neural networks grow sufficiently large the difficulties of neural network training begin to dominate the benefits of better approximation in larger reduced bases. 

Similar trends are observed in Figure \ref{helmholtz_fixed_rank_as_kle_large}.
\begin{figure}[H]
\begin{subfigure}{0.5\textwidth}
\includegraphics[width = \textwidth]{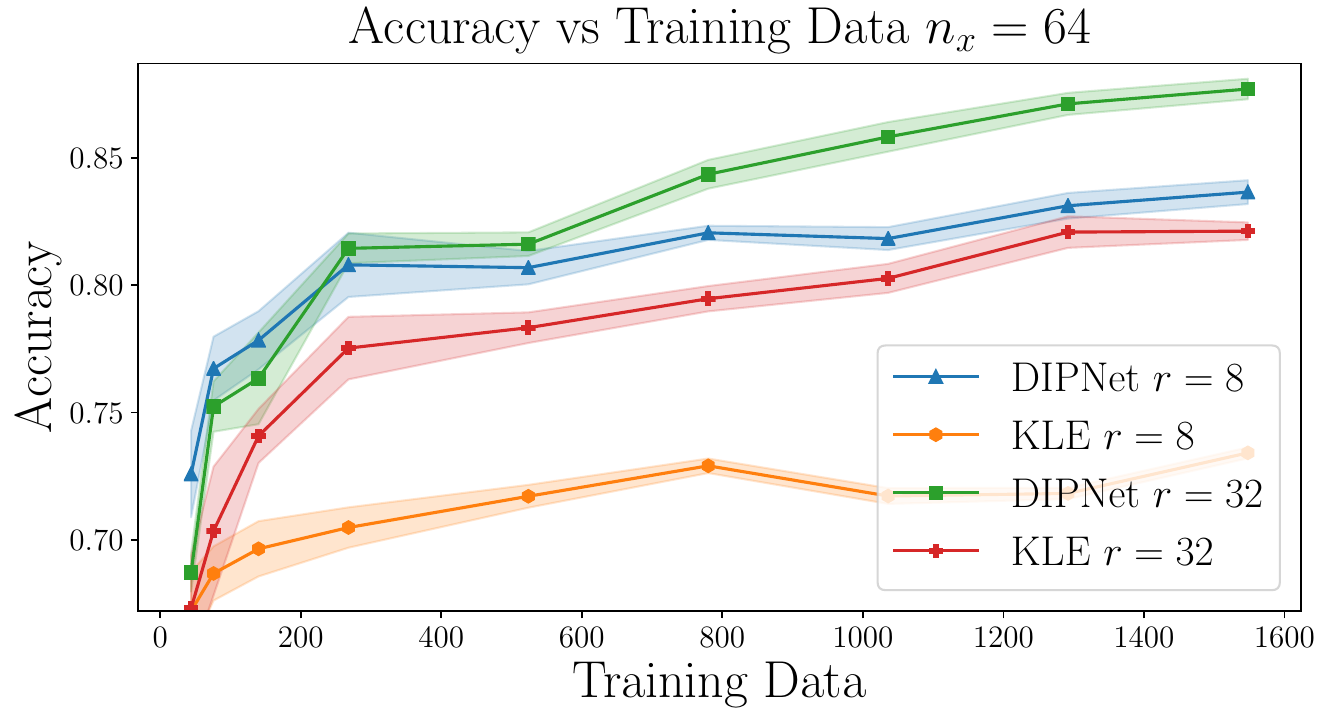}
\end{subfigure}%
\begin{subfigure}{0.5\textwidth}
\includegraphics[width = \textwidth]{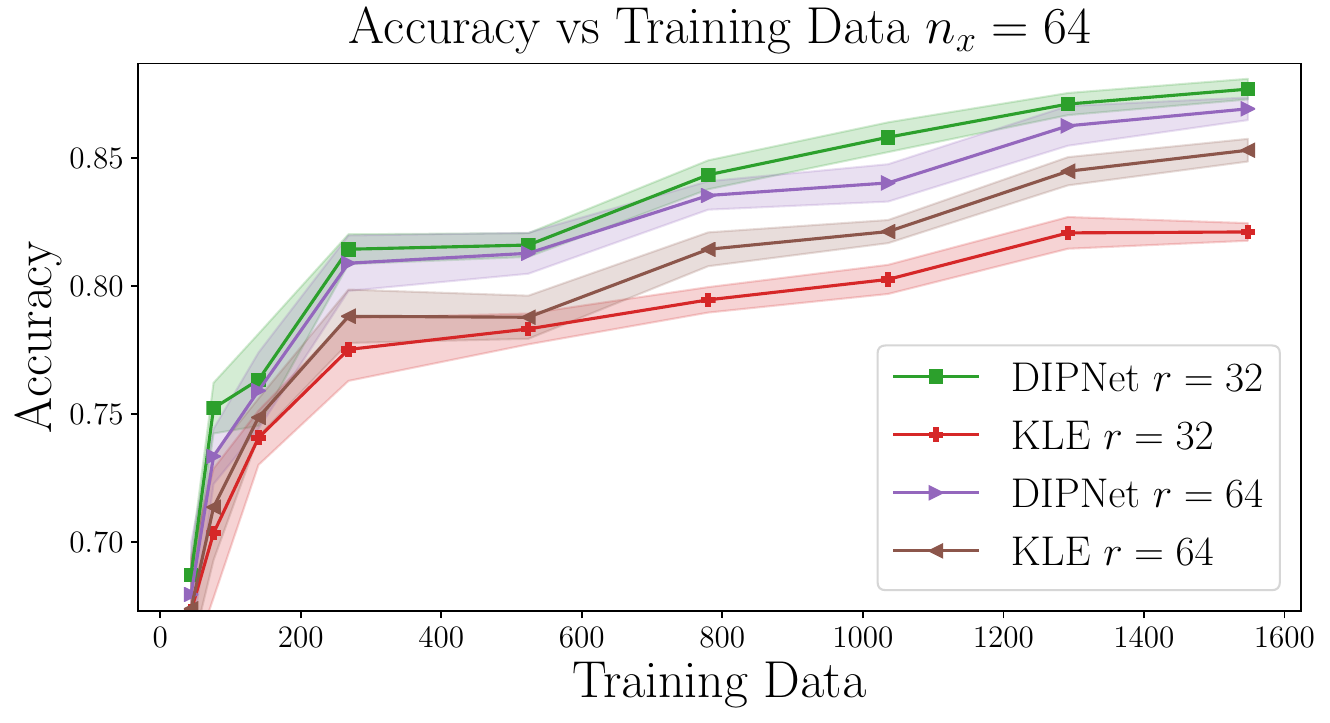}
\end{subfigure}
\caption{Accuracy vs number of training data seen for DIPNet and KLE networks as a function of rank for the coarse mesh Helmholtz problem.}
\label{helmholtz_fixed_rank_as_kle_small}
\end{figure}

\begin{figure}[H]
\begin{subfigure}{0.5\textwidth}
\includegraphics[width = \textwidth]{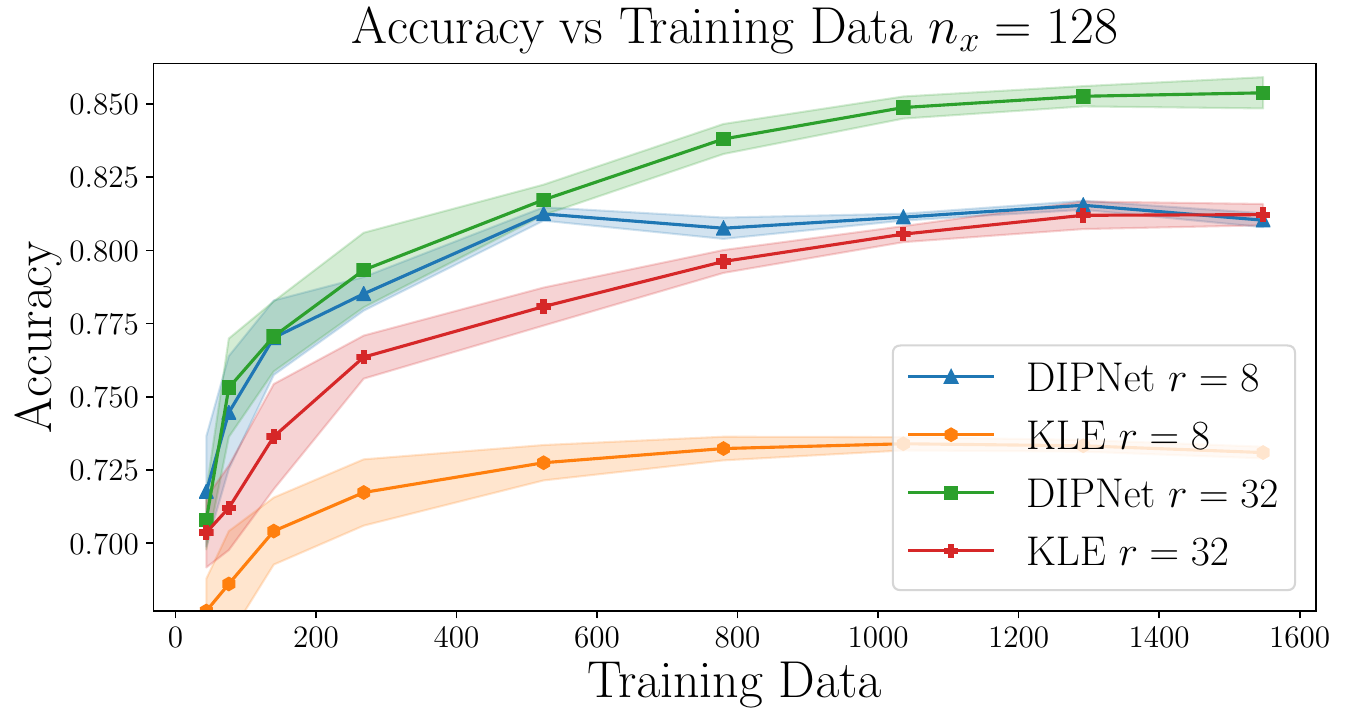}
\end{subfigure}%
\begin{subfigure}{0.5\textwidth}
\includegraphics[width = \textwidth]{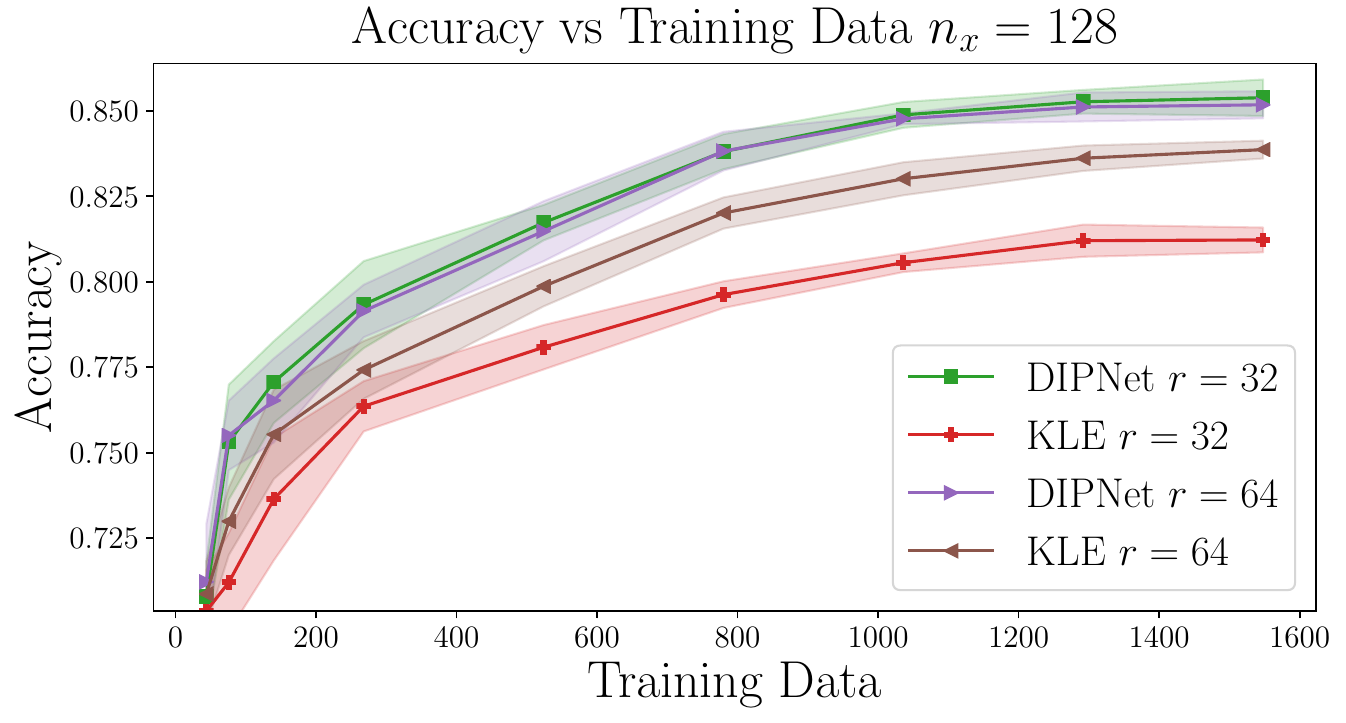}
\end{subfigure}
\caption{Accuracy vs number of training data seen for DIPNet and KLE networks as a function of rank for the fine mesh Helmholtz problem.}
\label{helmholtz_fixed_rank_as_kle_large}
\end{figure}

In these numerical results we compared fixed ranks for the inputs and the outputs, in order to have a straightforward comparison between the DIPNet and KLE networks. In general we recommend the use of the DIPNet, which uses explicit information of the parametric map to construct an informed subspace of the input. The decay of the AS and POD spectra alone can be used to decide the ranks of the input and output projectors for the networks, with the projection error result from Proposition \ref{prop_input_output_error_bound} in mind for this rank choice. This result is less useful when using the KLE spectral decay in choosing a network structure, since this additionally requires knowledge of the Lipschitz constance $L$ for the mapping $m \mapsto q$. Further, numerical results demonstrate that when the PDE mapping does not look like the covariance operator, KLE does not perform much better than random projectors, while the DIPNet performed well for both numerical experiments.

\subsection{Software}

Code for the numerical experiments can be found in \linebreak
hIPPYflow\cite{hippyflow}, a
Python library for parametric PDE based dimension reduction and
surrogate modeling based on
hIPPYlib\cite{VillaPetraGhattas2018,VillaPetraGhattas20}, a
Python library for adjoint based inference problems
. Neural network training was carried out
by
hessianlearn\cite{hessianlearn},
a Python library for Hessian based stochastic optimization in
TensorFlow and Keras
\cite{AbadiAgarwalBarhamEtAl2016,OLearyRoseberryAlgerGhattas2019,OLearyRoseberryAlgerGhattas2020}.

\section{Conclusions}

In this work we have presented a framework for the dimension-independent
construction of derivative-informed projected neural network (DIPNet)
surrogates for parametric maps governed by partial differential
equations.
The need for these surrogates arises in many-query problems for
expensive-to-solve PDEs characterized by high-dimensional parameters.
Such settings are challenging for surrogate construction, since the
expense of the PDE solves implies that a limited number of data can be
generated for training, far fewer than typical data-hungry surrogates
require in high dimensions.
In order to address these challenges, we present
projected neural networks that use derivative (Jacobian) information
for input dimension reduction and principal subspaces of the output to
construct parsimonious networks architectures. We advocate the use
of Jacobian-informed active subspace (AS) projection for the
parameters, and proper orthogonal decomposition (POD) projection for
the outputs, and motivate the strategy with analysis. 

In numerical experiments we compare this DIPNet strategy against a
similar strategy that instead uses Karhunen Lo\`{e}ve Expansion (KLE)
for the input dimension \cite{BhattacharyaHosseiniKovachki2020},
conventional full space dense networks, and similarly architected
projected neural networks using random projectors for the input and
output bases. The full space network performed poorly in the low data
regime, demonstrating the need for projected neural networks in high
dimensional surrogate problems with few training data. The DIPNet and
KLE-to-POD networks outperformed the random subspace projected
network, showing that it is important to choose basis vectors for the
input and output representation in the neural network that exploit the
structure of the parametric PDE-based map.

The KLE-to-POD strategy worked well for the
convection-diffusion-reaction problem, but not as well for the Helmholtz
problem. On the other hand, the DIPNet strategy
performed well in both numerical examples, consistently outperforming
all of the other neural networks. This makes sense in light of the
projection error plots (Figure \ref{pod_eigs_and_errors_confusion} and
Figure \ref{pod_eigs_and_errors_helmholtz}); in the
convection-diffusion-reaction case the KLE basis was able to reduce
the error similar to AS, but in the Helmholtz case the KLE basis was
unable to reduce the projection error to any significant
degree. Figures \ref{as_kle_vectors_confusion} and
\ref{as_kle_vectors_helmholtz} demonstrate that in both cases the AS
eigenvectors were able to resolve key localized information, in
particular highly-oscillatory smaller length scales for the Helmholtz
problem, while the KLE eigenvectors resembled typical elliptic PDE
eigenfunctions as expected.

These projectors used in DIPNet are 
infinite-dimensionally consistent, meaning that the neural network learns
the true information for the PDE and not artifacts of the discretization. 
This strategy allows for extensions to transfer learning and multi-fidelity
methods.

In future work we will employ these surrogates in many-query settings
such as Bayesian inference, optimal experimental design, and
stochastic optimization. The neural network architectures in this work
were kept simple in order to limit the sources of errors (more
complicated neural networks are harder to train). There is
considerable opportunity to tune the parametrizations of the
input--output projected networks to achieve higher performing
networks. Nevertheless, the present work demonstrates the power of
our general approach.

\bibliographystyle{siamplain}
\bibliography{local,references}

\appendix

\section{Scalable Computation of the Input and Output Projectors} \label{jacobian_with_adjoints}

The matrices $\mathbb{E}_\nu[\nabla q^T \nabla q]$, $\mathbb{E}_\nu[qq^T]$ can be approximated in a scalable and efficient manner using sampling and matrix-free randomized linear algebra. The integrals for  $\mathbb{E}_\nu[\nabla q^T \nabla q]$, $\mathbb{E}_\nu[qq^T]$  can be approximated as finite sums via Monte Carlo approximation; given draws $m_i \sim \nu$, one can generate training data $\{(m_i,q_i)\}_{i=1}^{N_\text{data}}$, and approximate the POD basis via Monte Carlo approximation:
\begin{equation} \label{method_of_snapshots}
  \mathbb{E}_\nu[qq^T] \approx \frac{1}{N_\text{data}} \sum_{i=1}^{N_\text{data}} q_iq_i^T \in \mathbb{R}^{d_Q\times d_Q} = \Phi D\Phi^T.
\end{equation}
A calculation of a low rank approximation of $\Phi D\Phi^T$ from samples is referred to as the method of snapshots since the training data $q_i$ are taken to be ``snapshots'' of the outputs. The integral $\mathbb{E}_\nu[\nabla q^T \nabla q]\in \mathbb{R}^{d_M}$  can also be approximated via the action of the matrices on Gaussian random vectors. The integral approximation is obtained via Monte Carlo, for $\omega_M \in \mathbb{R}^{d_M}$:
\begin{align}
  \mathbb{E}_\nu[\nabla q^T \nabla q ]\omega_M &\approx \frac{1}{N_\text{data}}\sum_{i=1}^{N_\text{data}} \nabla q(m_i)^T\nabla q(m_i) \omega_M
\end{align}
This matrix-free computation requires the of the action of operators \linebreak $\nabla q(m_i)\in \mathbb{R}^{d_Q\times d_M}$ and $\nabla q(m_i)^T \in \mathbb{R}^{d_M \times d_Q}$ at various points $m_i \in \mathbb{R}^{d_M}$. The Monte Carlo sum leverage modern computing architecture and use sample parallelism, making the computation for many samples tractable in terms of time and memory.

\subsection{Approximation with randomized linear algebra}

The single pass randomized eigenvalue decomposition algorithms can approximate the $k$ low rank eigenvalue decomposition of a matrix $Q_kD_kQ_k^T \approx A\in \mathbb{R}^{n \times n}$ for $k+p$ matrix vector products, with expected approximation error bounded by
\begin{equation}
  \mathbb{E}_\rho[\|A - Q_kD_kQ_k^T\|] \leq  \bigg(1 + 4 \frac{\sqrt{n(k+p)}}{p-1} \bigg)|d_{k+1}|.
\end{equation}
See \cite{HalkoMartinssonTropp2011,MartinssonTropp2020}, More accuracy can be obtained via double pass or power iteration, but this requires more applications of the operators. Here $\mathbb{E}_\rho$ denotes the expectation taken with respect the the Gaussian measure $\rho$ from which the random matrix $\Omega \in \mathbb{R}^{n + (k+p)}$ is sampled. Higher accuracy in the approximation can be achieved for more applications of the operator (i.e. power iteration, multi-pass methods). Automated procedures such as adaptive range finding can be used to find the rank $k$ such that a specific tolerance is met, i.e. for a given $\epsilon>0$ find $k,p$ such that
\begin{equation}
  \bigg(1 + 4 \frac{\sqrt{n(k+p)}}{p-1} \bigg)|d_{k+1}| \leq \epsilon.
\end{equation}

The matrix approximation is formed via the action of the operator on Gaussian random vectors which is guaranteed to resolve dominant modes of the operator due to concentration of measure, matrix concentration inequalities such as Bernstein or Chernov. This is an inherently scalable process since all of the matrix vector products are independent of one another, i.e. they can be efficiently parallelized and high levels of concurrency can be leveraged. This is in contrast to other matrix-free approximations such as Krylov, where the matrix vector products are inherently serial.

\subsection{The action of $\nabla q$ and $\nabla q^T$ using adjoints}
The implicit derivative of the outputs $q(m) = q(u(m))$ with respect to the the parameters $m$ can be found via a Lagrange multiplier approach to incorporate the implicit dependence on the weak form of the state equation. Since $q(m) \in \mathbb{R}^{d_Q}$, the result is derived for each component:
\begin{equation}
  \mathcal{L}_i(u,m,v) = q(u(m))_i + v_i^TR(u,m),
\end{equation}
or equivalently take all derivatives at once, with the adjoint variable matrix $V \in \mathbb{R}^{d_V \times d_Q}$:
\begin{equation}
  \mathcal{L}(u,m,v) = q(u(m)) + V^TR(u,m).
\end{equation}
A variation of $\mathcal{L}$ with respect to the adjoint variable matrix yields the state equation:
\begin{equation}
  R(u,m) = 0, 
\end{equation}
which is solved for $u$, given a draw $m$. A variation of $\mathcal{L}$ with respect to the state variable $u$ yields the adjoint equation:
\begin{equation}
  \underbrace{\frac{\partial q}{\partial u}}_B + V^T\underbrace{ \frac{\partial R}{\partial u}}_A = 0 
\end{equation}
which is solved for each adjoint variable:
\begin{equation}
  V^T = -BA^{-1}.
\end{equation}
Lastly a variation of $\mathcal{L}$ with respect to the parameters $m$ allows for the formation of the Jacobian:
\begin{equation}
  \nabla_m q + V^T \underbrace{\frac{\partial R}{\partial m}}_C = 0.
\end{equation}
So finally one can compute 
\begin{equation}
  \nabla_m q(u(m)) = BA^{-1}C = \frac{\partial q}{\partial u}\bigg[\frac{\partial R}{\partial u}\bigg]^{-1}\frac{\partial R}{\partial m}.
\end{equation}
To evaluate at a point $m_i$ one must solve the state and adjoint equations first before applying the operator in a direction $\widehat{m} \in \mathbb{R}^{d_M}$:
\begin{equation}
  \nabla_m q(u(m_i))\widehat{m} = BA^{-1}C \widehat{m}.
\end{equation}
The state and adjoint equations only need to be calculated once before the action of the operator in many directions $\widehat{m}$ is formed. Similarly the transpose of the operator can be applied to $q \in \mathbb{R}^{d_Q}$ via
\begin{equation}
  \nabla_m q(u(m))^T = C^TA^{-T}B^T \widehat{q}.
\end{equation}

The key observation about the action of $\nabla q$ and $\nabla q^T$ is that unlike the forward solution of the PDE state equation they do not involve any nonlinear iterations. These operators only involve linearizations of the PDE. These matrices can be efficiently computed and applied over and over to many right hand sides, which makes the cost of evaluating the derivatives via adjoint methods cheaper than the forward solution of the parametric PDE. The expensive part of the derivative evaluation is the solution of the forward nonlinear PDE, which is required for training data generation anyways. Thus one can save computation by generating training data and approximations of derivative subspaces in tandem. The computational cost of computing derivative subspaces of the outputs is marginally less expensive than the evaluation of the nonlinear mapping. 
The costs of the linearizations can be made negligible in regimes where direct solvers is used for the forward mapping, or iterative solvers with preconditioners are used. In the case of a direct solver, the matrix factorization costs can be amortized when applied to many different right hand sides. When an iterative solver with a preconditioner is used, the action of the preconditioner and its transpose can be reused to evaluate the action of $\nabla q$ and $\nabla q^T$.

\section{Numerical Experiment Appendix}
\label{num_exp_appendix}

\subsection{Notes on implementation specifics} \label{appendix_implementation_specifics}

A complete description of parametric PDEs and neural networks can be found in \url{https://github.com/hippylib/hippyflow}. For the convection-diffusion-reaction problem $256$ samples were used in the Monte Carlo approximation for the active subspace projector. For the Helmholtz problem $128$ were used. For both convection-diffusion-reaction and Helmholtz problems $400$ samples were used in the Monte Carlo approximation of the proper orthogonal decomposition. No sampling was required for the KLE projector computation.

The AS and KLE projectors in \texttt{hIPPYflow} are covariance matrix and mass matrix orthogonal respectively. This makes the computation of these projectors consistent in the infinite dimensional limit. Numerical results for the neural network approximation were improved however when these projectors were re-orthogonalized with respect to the identity matrix. Rescaling also improved the numerical results. See \url{https://github.com/hippylib/hippyflow} for more information.

Gradient and Hessian subsampling were used to improve both the performance and computational cost of the inexact Newton CG optimizer. When the number of training data were greater than or equal to $256$, a gradient batch size of $128$ was used and a Hessian batch size of $16$ was used. When the batch size was less than 256, a gradient batch size of $N_\text{data}/4$ was used, and a Hessian batch size of $N_\text{data}/32$ was used. For more information on gradient and Hessian subsampling see \cite{OLearyRoseberryAlgerGhattas2019}.

Unregularized least squares led to better testing accuracy in this problem (probably need surgical regularization that leaves the informed modes unchanged). In order to improve the conditioning of the CG solve in Inexact Newton CG, Levenberg-Marquardt damping of $\gamma = 10^{-3}$ was used. For information on the implementation specifics of the optimizer, see \url{https://github.com/tomoleary/hessianlearn}.

\subsection{Distribution for uncertain parameters $m$} \label{section_matern_prior}
Both problems use a Gaussian probability distribution for the uncertain parameters $m$, with Mat\'{e}rn covariance that is a fractional PDE operator:
\begin{equation} 
  C = (\delta I - \gamma \nabla \cdot (\Theta \nabla))^{-\alpha}.
\end{equation}

The uncertain parameters $m$ are the solution of a linear fractional stochastic PDE \cite{LindgrenRueLindstrom2011}. The choice of $\alpha > d/2$ where $d$ is the spatial dimension of the PDE, makes the covariance trace class, in both problems we take $\alpha = 2$. When the covariance is trace class this guarantees that the uncertain parameters $m$ is $L^2$ integrable, which makes the PDE well-posed. This trace class condition is also typically required for the well-posedness of outer loop inference problems that the surrogate is to be used in. The coefficients $\delta, \gamma>0$ that show up in the fractional PDE control the correlation length as well as the marginal variance of the probability distribution (i.e. the difficulty of the problem). The matrix $\Theta \in \mathbb{R}^{d\times d}$ introduces spatial anisotropy \cite{ChenVillaGhattas2019}. When the PDE problem is discretized the vector representation of $m$ has dimension $d_M$ and we consider $m$ to be a vector in $\mathbb{R}^{d_M}$.

The correlation length for draws from $\nu$ is controlled by the ratio $\delta / \gamma$, and for a fixed correlation length, larger values of $\gamma$ and $\delta$ reduce the marginal variance for distribution.

\subsection{Convection-Diffusion-Reaction PDE Problem}

The volumetric forcing function $\mathbf{f}$ is a Gaussian bump located at $x_1 = 0.7,x_2=0.7$.

\begin{equation}
  \mathbf{f}(x,y) = \max\bigg( 0.5, e^{-25(x_1 - 0.7)^2  -25 (x_2 - 0.7)^2}\bigg)
\end{equation}

\begin{figure}[H]
\center
\includegraphics[width = 0.7\textwidth]{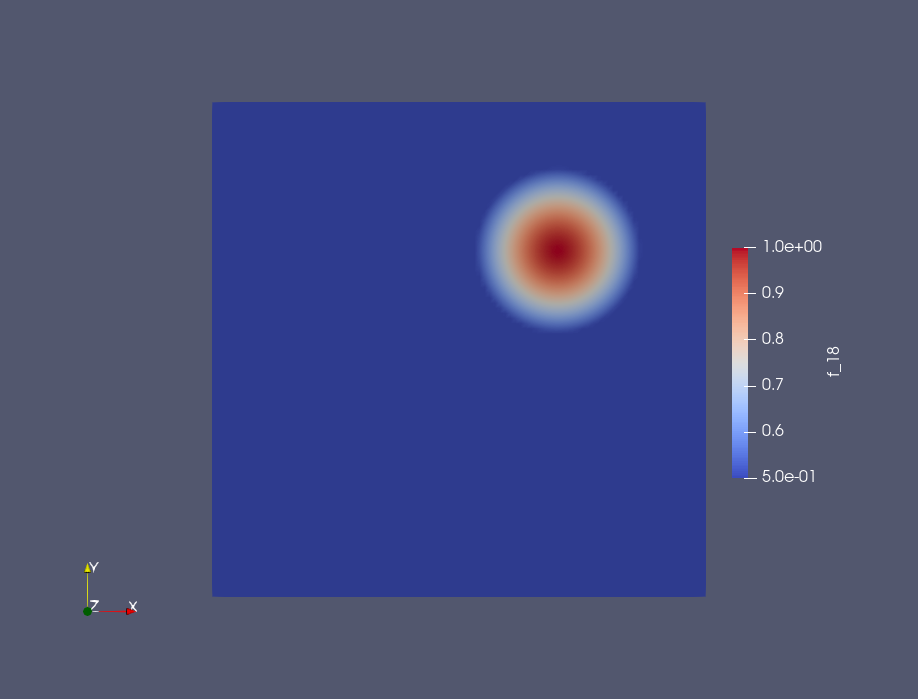}
\caption{Truncated Gaussian blob forcing term}
\end{figure}

The velocity field $\mathbf{v}$ used for each simulation is the solution of a steady-state Navier-Stokes equation with side walls driving the flow:
\begin{subequations}
\begin{align}
  -\frac{1}{Re}\Delta \mathbf{v} + \nabla q + \mathbf{v} \cdot \nabla \mathbf{v} &= 0 \text{ in } \Omega \\ 
  \nabla \cdot \mathbf{v} &= 0 \text{ in } \Omega \\
  \mathbf{v} &= \mathbf{g}  \text{ on } \partial \Omega.
\end{align}
\end{subequations}
The coefficient $Re = 100$ is the Reynolds number, and the Dirichlet term $\mathbf{g}$ is given by $\mathbf{g} = e_2$ on the left wall, $\mathbf{g} = -e_2$ on the right wall and zero everywhere else (see the Advection-Diffusion Bayesian Tutorial in hIPPYlib for more information \cite{VillaPetraGhattas2018}). The velocity field is shown below.

\begin{figure}[H]
\center
\includegraphics[width = 0.7\textwidth]{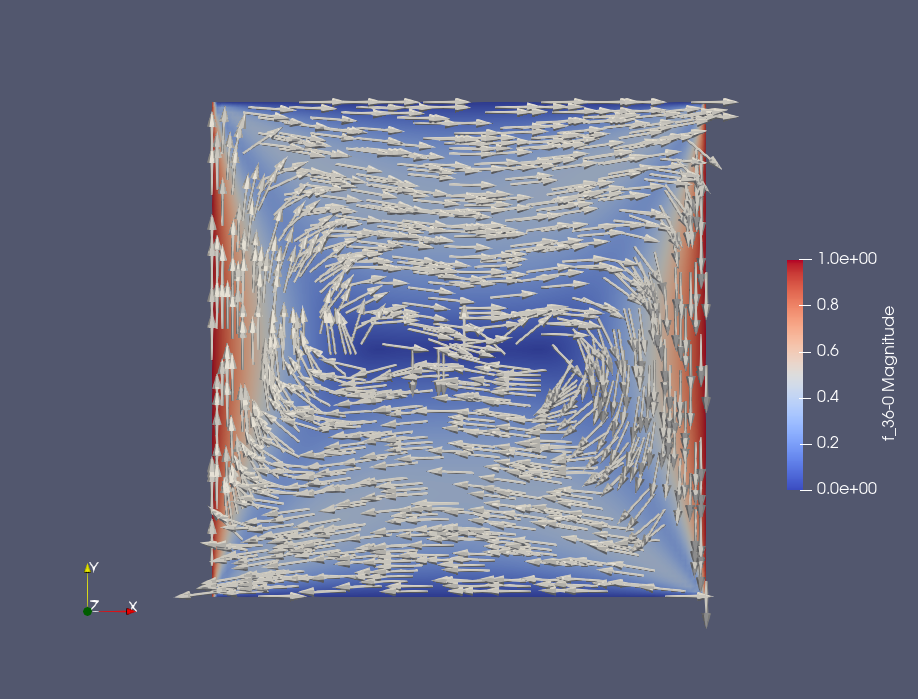}
\caption{Velocity field for convection-diffusion-reaction PDE}
\end{figure}

\end{document}